\newtheorem{theorem}{Theorem}[section]
\newtheorem{claim}[theorem]{Claim}
\newtheorem{lemma}[theorem]{Lemma}
\newtheorem{prop}[theorem]{Proposition}
\newtheorem*{theorem*}{Theorem}{\bf}{\it}
\newtheorem*{proposition*}{Proposition}{\bf}{\it}
\newtheorem*{observation*}{Observation}{\bf}{\it}
\newtheorem*{lemma*}{Lemma}{\bf}{\it}
\theoremstyle{definition}
\theoremstyle{remark}
\newtheorem{remark}[theorem]{Remark}
\renewcommand{\tilde}{\widetilde}
\def\Xint#1{\mathchoice
{\XXint\displaystyle\textstyle{#1}}%
{\XXint\textstyle\scriptstyle{#1}}%
{\XXint\scriptstyle\scriptscriptstyle{#1}}%
{\XXint\scriptscriptstyle\scriptscriptstyle{#1}}%
\!\int}
\def\XXint#1#2#3{{\setbox0=\hbox{$#1{#2#3}{\int}$ }
\vcenter{\hbox{$#2#3$ }}\kern-.6\wd0}}
\def\dashint{\Xint-}
\newcommand{\N}{\mathcal{N}}
\begin{document}
\title[]{ Nodal sets of Laplace eigenfunctions:
proof of Nadirashvili's conjecture and of the lower bound in Yau's conjecture.}
%\thanks{...}
\author{Alexander Logunov}
\address{School of Mathematical Sciences, Tel Aviv University, Tel Aviv 69978, Israel}
\address{Chebyshev Laboratory, St. Petersburg State University, 14th Line V.O., 29B, Saint Petersburg 199178 Russia}
\email{log239@yandex.ru}

%\keywords{Harmonic functions, Nodal set}
%\subjclass[2010]{31B05}

\begin{abstract}
  Let $u$ be a harmonic function in the unit ball $B(0,1) \subset \mathbb{R}^n$, $n \geq  3$, such that $u(0)=0$.
  Nadirashvili conjectured that there exists a positive constant $c$, depending  on the dimension $n$ only, such that $H^{n-1}(\{u=0 \}\cap B) \geq c$.  We prove Nadirashvili's conjecture as well as its counterpart on $C^\infty$-smooth Riemannian manifolds.
 The latter yields the lower bound in Yau's conjecture. Namely, we show that for any compact $C^\infty$-smooth Riemannian manifold $M$ (without boundary) of dimension $n$  there exists $c>0$ such that for any Laplace eigenfunction $\varphi_\lambda$ on $M$, which corresponds to the eigenvalue $\lambda$, the following inequality holds: $c \sqrt \lambda \leq H^{n-1}(\{\varphi_\lambda =0\})$.

 \end{abstract}
\maketitle
%%%%%%%%%%%%%%%%%%%%%%%%%%%%%%%%%%%%%%%%%%%%%
\section{Introduction.}
 Let $M$ be a $C^\infty$ smooth Riemannian  manifold (with or without boundary) of dimension $n$. 
 Let $B$ be a geodesic ball on $M$ with radius $1$. Assume that a number $\lambda >0$. Consider any solution of the equation  $\Delta u + \lambda u =0$ in $B$  (the boundary conditions for $u$ do not matter) and denote the zero set of $u$ by $Z_u$. We prove the following result:
\begin{theorem} \label{Yau}  There exist $c>0$ and $\lambda_0$, depending on $M$ and $B$ only, such that 
 if $\lambda> \lambda_0$, then 
$$ c \sqrt \lambda \leq H^{n-1}( Z_u\cap B).$$
\end{theorem}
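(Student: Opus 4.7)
The plan is to deduce Theorem \ref{Yau} from the manifold version of Nadirashvili's conjecture announced in the abstract, via the classical trick of lifting an eigenfunction to a harmonic function in one more dimension and then covering $B$ at the natural scale $1/\sqrt\lambda$. On the product manifold $\tilde M := M \times \R$ with the product metric, I set $\tilde u(x,t) := u(x) e^{\sqrt\lambda\, t}$. A direct computation gives $\Delta_{\tilde M} \tilde u = (\Delta_M u + \lambda u) e^{\sqrt\lambda\, t} = 0$, so $\tilde u$ is harmonic on $\tilde M$ and $Z_{\tilde u} = Z_u \times \R$. Fubini applied to Hausdorff measure along the $t$-direction yields
\[
H^n(Z_{\tilde u} \cap (A \times I)) = H^1(I) \cdot H^{n-1}(Z_u \cap A)
\]
for any Borel $A \subset M$ and interval $I \subset \R$.

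Next, I would fix $\rho > 0$ large (depending only on the geometry of $M$) and pick a Vitali-type collection of geodesic balls $B_i := B(y_i, r)$ with $r := \rho/\sqrt\lambda$, such that $\{B(y_i, r/5)\}$ is pairwise disjoint and $B \subset \bigcup_i B_i$; the cardinality of this collection is comparable to $\lambda^{n/2}$. A Faber--Krahn argument forces $u$ to change sign inside each $B_i$: if $u$ had constant sign on $B_i$, then $B_i$ would lie in a single nodal domain of $u$, yet the first Dirichlet eigenvalue of any geodesic ball of radius $r$ in $M$ is at least a constant multiple of $1/r^2$, contradicting the eigenvalue being $\lambda$ once $\rho$ is chosen large enough. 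Pick a zero $z_i \in B_i$ of $u$ and consider the ball $\tilde B_i := B((z_i, 0), r) \subset \tilde M$, which is centered at a zero of $\tilde u$.

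Applying the manifold counterpart of Nadirashvili's conjecture to $\tilde u$ on $\tilde B_i$, after rescaling by $1/r$ to unit size (the rescaled metric on the unit ball lies in a fixed $C^\infty$ neighbourhood of the Euclidean metric as $r\to 0$, so the constant from the manifold theorem is uniform), yields $H^n(Z_{\tilde u} \cap \tilde B_i) \geq c_1 r^n$. Choosing $I_i \subset \R$ and $B_i' \subset M$ with $B_i' \times I_i \subset \tilde B_i$ and both factors of diameter comparable to $r$, the Fubini identity above gives $H^{n-1}(Z_u \cap B_i') \geq c_2 r^{n-1}$. Summing over the $\asymp \lambda^{n/2}$ balls and using bounded overlap of $\{B_i'\}$,
\[
H^{n-1}(Z_u \cap B) \geq c_3\, \lambda^{n/2} \cdot \lambda^{-(n-1)/2} = c_3 \sqrt\lambda.
\]

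The only hard input is the manifold analogue of Nadirashvili's conjecture for harmonic functions invoked in the last step; that is a genuinely difficult statement, which the body of the paper must establish via delicate doubling-index / frequency machinery. Once it is available, the reduction sketched here is essentially soft: it uses only Faber--Krahn (to ensure every small ball sees a zero of $u$) and Fubini (to drop one Hausdorff dimension when passing from $\tilde u$ back to $u$).
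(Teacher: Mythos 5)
Your reduction is essentially the paper's: lift $u$ to the harmonic function $\tilde u(x,t)=u(x)e^{\sqrt\lambda t}$ on $M\times\R$, produce $\asymp\lambda^{n/2}$ disjoint balls of radius $\asymp 1/\sqrt\lambda$ in $B$ each containing a zero of $u$, apply the scaled manifold form of Nadirashvili's conjecture (the paper's Theorem~\ref{Nad1}) to $\tilde u$ on the corresponding ball in $M\times\R$ centred at a zero, and use $Z_{\tilde u}=Z_u\times\R$ to drop one Hausdorff dimension. The single place you diverge is the density lemma: the paper proves $Z_u$ is $C/\sqrt\lambda$-dense by applying the Harnack inequality to $\tilde u$ (exploiting its exact exponential growth in $t$), whereas you run a Faber--Krahn/domain-monotonicity argument on $u$. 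Both routes are standard, but two small points in yours need tightening: the relevant inequality is $\lambda\le\lambda_1(B_i)\le C/r^2$, so you need the \emph{upper} bound on the first Dirichlet eigenvalue of a geodesic ball of radius $r$, not a lower bound, to get the contradiction $\rho^2\le C$; and since $u$ need not be a global Dirichlet eigenfunction, you should avoid the phrase ``nodal domain'' and instead argue directly that if $u$ has constant sign on $\overline{B_i}$, pairing against the first Dirichlet eigenfunction of $B_i$ forces $\lambda\le\lambda_1(B_i)$.
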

  We prove a similar result for harmonic functions, which was conjectured by Nadirashvili (\cite{N}):
 \begin{theorem} \label{Nad}
 There exists $c>0$, depending on $M$ and $B$ only, such that for any harmonic function $h$ on $B$ that vanishes at the center of $B$ the following estimate holds: 
$$c  \leq H^{n-1}( Z_h\cap B).$$
 \end{theorem}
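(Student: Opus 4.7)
The plan is to reduce the problem to a uniformly elliptic divergence-form equation on a Euclidean ball and then prove the lower bound by a scale-induction driven by an Almgren-type doubling index, together with a combinatorial lemma that controls how this index behaves under cube refinement. In normal coordinates around the center of $B$, harmonicity of $h$ becomes a uniformly elliptic equation $\partial_i(a^{ij}(x)\partial_j h)=0$ with smooth coefficients on a Euclidean ball, and the Riemannian and Euclidean $(n-1)$-Hausdorff measures are comparable with constants depending only on $(M,B)$. So one may assume from the outset that $B$ is Euclidean, $h(0)=0$, and associate to $h$ the doubling index
\[
N(x,r)=\log_2\frac{\sup_{B(x,2r)}|h|}{\sup_{B(x,r)}|h|},
\]
which is approximately monotone in $r$, satisfies standard three-ball inequalities, and plays the role of the ``degree'' of $h$ at scale $r$.

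Next, I would split the argument by the size of $N(0,1)$. In the bounded-frequency regime, $N(0,1)\le N_0$ for some absolute $N_0$, a normalization-and-compactness argument would extract a nontrivial blow-up limit solving a constant-coefficient elliptic equation of bounded frequency and vanishing at the origin; such a limit has nodal set of positive $(n-1)$-dimensional measure, and this lower bound transfers back uniformly to $h$. In the high-frequency regime, the idea is to partition the cube containing $B$ into $A^n$ equal subcubes at a carefully chosen dyadic scale and iterate: if in most of these subcubes the doubling index has dropped by a definite amount, the inductive hypothesis would apply to each, and summing their contributions would give the desired total lower bound.

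The core obstacle, and the main new ingredient one has to supply, is a combinatorial cube lemma controlling the location of the ``bad'' subcubes in which $N$ fails to drop. The lemma must assert that the number of bad subcubes is much smaller than $A^{n-1}$. Its proof should be geometric-analytic: if many bad subcubes clustered inside a codimension-one slab, then an iterative propagation of smallness along the slab (three-ball inequalities chained together inside the slab, together with a hyperplane-type doubling estimate) would force the doubling index of the full cube to strictly exceed its assumed value, a contradiction. Once this lemma is in place, the induction runs cleanly: good subcubes are treated by the inductive hypothesis, bad subcubes by recursion at a finer scale, and the accumulated losses can be made geometrically summable by tuning the refinement parameter $A$ against the threshold drop in $N$.

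I expect the hardest part of the proof to be precisely this calibration. All the classical analytic tools — Almgren-type monotonicity, Harnack inequalities, three-ball inequalities, and quantitative unique continuation — will be available and used, but none of them alone controls the distribution of large-frequency regions in space. The genuine new content is the geometric-combinatorial restriction that forbids subcubes with large doubling index from aligning along a codimension-one pattern, together with a choice of parameters that makes the induction close up with a frequency-independent constant $c$ depending only on $(M,B)$.
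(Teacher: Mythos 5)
Your framework is right at the start: normal coordinates reduce the problem to a uniformly elliptic divergence-form equation on a Euclidean ball, the almost-monotone doubling index (or frequency) is the right invariant, the argument should split by the size of the frequency, and a combinatorial subcube lemma controlling the number of cells with high doubling index is indeed the central imported tool (your sketch of its proof via propagation of smallness along a near-hyperplane slab matches how it is established in \cite{LM2}). But your closing argument in the high-frequency regime has a genuine gap, and it hides exactly the new idea of the paper.

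You propose to partition the cube, observe that most subcubes have frequency dropped by a factor $(1+c)$, ``apply the inductive hypothesis to each and sum.'' This does not close. The inductive hypothesis only gives a lower bound for the nodal measure inside a subcube \emph{that contains a zero of $h$}, and the combinatorial lemma says nothing about which subcubes are hit by the zero set. Moreover, even if every good subcube contained a zero, summing $\sim A^n$ contributions of size $\sim A^{-(n-1)}$ yields a bound $\sim A$, which is off by a full power of $A$ from what a codimension-one set can produce; only $\sim A^{n-1}$ subcubes can be counted, so you need a precise mechanism producing that many zeros, not a blind sum. The ``bad subcubes by recursion at a finer scale'' clause also fails to terminate, because nothing prevents bad subcubes from containing zeros of $h$ on which the main estimate must eventually apply.

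What actually closes the argument is a construction your proposal never touches: a \emph{tunnel} connecting the point where $|h|$ attains its maximum on a well-chosen sphere $\partial B_g(p,s)$ to a nearby interior point where $|h|$ is exponentially smaller. One first locates a spherical layer of width $\sim 1/\log^2 N$ in which the frequency stays comparable to a single number $N$ (a calculus lemma on monotone functions), and derives growth estimates near the maximum. One then builds a thin box $T$ from the max point inward and slices it into $[\sqrt N]^{n-1}$ parallel one-dimensional strings (``tunnels'') of small cubes. Along any tunnel, $|h|$ must grow by a factor $2^{\,cN/\log^2 N}$; but the \emph{iterated} version of the subcube lemma (one must iterate it $\log\log$-many times, via a simple binomial tail estimate, to get the drop down to $N\cdot 2^{-c\log N/\log\log N}$) guarantees that at least half the tunnels consist entirely of cubes with this much smaller doubling index. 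In such a tunnel, the Harnack inequality forces the needed growth to be achieved only through $\gtrsim 2^{c\log N/\log\log N}$ sign changes. This yields $[\sqrt N]^{n-1}\cdot 2^{c\log N/\log\log N}$ disjoint balls of radius $\sim r/\sqrt N$ that each contain a zero. Finally, the paper does not run a direct induction: it defines $F(N)$ as the infimum of normalized nodal measure over functions of frequency $\le N$ and argues by near-minimality; applying $F(N)$ on each of these small balls and summing gives $F(N)\cdot 2^{c\log N/\log\log N}\cdot \rho^{n-1}$, contradicting $\le 2F(N)\rho^{n-1}$ as soon as $N$ is large. So the real conclusion is that the frequency of a near-minimizer is bounded, and the bounded-frequency case then gives the uniform constant (and here the paper uses an elementary inscribed-ball estimate $F\gtrsim N^{-(n-1)}$, not a blow-up compactness argument). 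The tunnel construction, the spherical-layer localization of the frequency, and the near-minimizer contradiction are the missing content; the combinatorial subcube lemma is necessary but very far from sufficient.
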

 As an immediate corollary from Theorem \ref{Nad} we obtain that if $h$ is a non-constant harmonic function in $\mathbb{R}^3$, then the zero set of $h$ has an infinite area.  Apparently, this is also a new result.

 Theorems \ref{Yau} and \ref{Nad} are related to each other by  a standard trick that allows to pass from Laplace eigenfunctions to harmonic functions.
If $u$  satisfies $\Delta u + \lambda u =0$ on $M$, then one can consider the harmonic function 
$$ h(x,t) = u(x) \exp(\sqrt \lambda t), $$
on the product manifold  $M \times \mathbb{R}$. The zero set  of $h$ and the zero set of $u$ are related by   $$Z_h=Z_u  \times \mathbb{R}.$$

 Then Theorem \ref{Yau} will follow in a straightforward way from the $\frac{1}{\sqrt \lambda}$-scaled version of Theorem \ref{Nad} and the fact 
 that $Z_u$ is $\frac{const}{\sqrt \lambda}$ is dense in $B$. The latter fact, which is well known, is the corollary of the Harnack inequality for harmonic functions. We bring the proof of this fact for the reader's convenience in Section \ref{sec:Yau}, where we also deduce Theorem \ref{Yau} from the scaled version of Theorem \ref{Nad}.

Most of the paper is devoted to the proof  of Nadirashvili's conjecture.

 Nadirashvili's conjecture was motivated by the question of Yau, who conjectured that if $M$ is a compact $C^\infty$-smooth Riemannian manifold  with no boundary, then there exist $c,C>0$, depending on $M$ only, such that the Laplace eigenfunctions $\varphi_\lambda$ on $M$ ($\varphi_\lambda$ corresponds to the eigenvalue $\lambda$) satisfy
 $$ c \sqrt \lambda \leq H^{n-1}(\varphi_\lambda=0) \leq C \sqrt \lambda.$$ 

 The lower bound for Yau's conjecture in dimension $2$, which is not difficult, was proved by Br\"uning and also by Yau.
 In dimension $n\geq 3$ the lower bound for Yau's conjecture  follows now from Theorem \ref{Yau}.

For the case of real-analytic metrics the Yau conjecture was proved by Donnelly and Fefferman \cite{DF}. 
 Theorem \ref{Yau} and Theorem \ref{Nad} do not follow from the Donnelly-Fefferman argument and are new in the case $M=\mathbb{R}^n$, $n\geq 3$, endowed with the standard Euclidean metric. Roughly speaking, Nadirashvili's conjecture implies the lower bound for Yau's conjecture and gives an additional information on small scales.  The assumption of real analyticity of the metric seems to be of no help for the question of Nadirashvili, but it was exploited by Donnelly and Fefferman to establish the lower and upper bounds  in Yau's conjecture.

 Concerning the upper bounds for  Yau's conjecture without real-analyticity assumptions, Donnelly and Fefferman (\cite{DF1}) proved that 
 in dimension $n=2$ the following estimate holds
 $$H^{1}(u=0) \leq C \lambda^{3/4}.$$
 Recently this upper bound was refined to $ C \lambda^{3/4 -\varepsilon}$ in \cite{LM1}, which we  advise to read before this paper.

 In higher dimensions  Hardt and Simon (\cite{HS}) showed that 
 $$H^{n-1}(u=0) \leq C \lambda^{C \sqrt \lambda}.$$
 Recently an upper bound with polynomial growth was obtained in \cite{LM2}:
 $$H^{n-1}(u=0) \leq C \lambda^{C }.$$

 In this paper we use  techniques of propagation of smallness developed in \cite{LM1} and \cite{LM2}. This paper is self contained with the exception of Theorem \ref{th1/2}, which was borrowed from \cite{LM2}.
 \subsection*{Acknowledgments} 
 This work  was started in collaboration with Eugenia Malinnikova who suggested to 
 apply the combinatorial approach to nodal sets of Laplace eigenfunctions. Her role in this work is no less  than the author's one.  Unfortunately, she
refused to be a coauthor of this paper.
On various stages of this work  
I discussed
it with Lev Buhovsky and Mikhail Sodin.  Lev and Mikhail also  
read the first
draft of this paper and made many suggestions and comments. I would like to mention that it was  
Dmitry Chelkak
from whom I heard for the first time about the question on the area of  
the zero set
of a harmonic function in $\mathbb{R}^3$. I thank all of them.

%  The earlier version of the paper contained a weaker estmate $H^{n-1}(\varphi_\lambda = 0) \geq c \lambda^{1/2 - \varepsilon}$ obtained by a different method, which didn't imply that a non-constant harmonic function in $\mathbb{R}^3$ has an infinite area of the nodal set. Michail Sodin suggested to treat the latter question using the general theory on the growth of power series of entire functions.  This suggestion was realised in the form of Lemma \ref{lem:mf} and helped the author to remove the $\varepsilon$. 

   This work was started while the author was visiting NTNU, continued at the Chebyshev Laboratory (SPBSU) and finished at TAU. The final version of the paper was completed at the Institute for Advanced Study. I am grateful to these institutions for their hospitatility and for great working conditions.
  
  The author was  supported in part by ERC Advanced Grant~692616 and ISF Grants~1380/13, 382/15 and by a Schmidt Fellowship at the Institute for Advanced Study.

\section{Almost monotonicity of the frequency.}
Given a point $O$ on a Riemannian manifold $M$,  let us consider normal coordinates with center at  $O$.  We will identify a neighborhood of $O$ on $M$ with a neighborhood of the origin in the Euclidean space. Now, we have two metrics:
the Euclidean metric, which we will denote by $d(x,y)$, and the Riemannian metric $d_g(x,y)$. The symbol $B(x,r)$ will denote the ball with center at $x$ and radius $r$ in Euclidean metric while $B_g(x,r)$ is used for the geodesic ball with respect to $g$. The radius $r$ will always be smaller than the injectivity radius. 
Due to the choice of normal coordinates for any $\varepsilon>0$ there is a sufficiently small $R_0=R_0(\varepsilon,M,g,O)>0$
such that 
 \begin{equation} \label{eq:epsilon} 
\frac{d(x,y)}{d_g(x,y)} \in(1-\varepsilon, 1+\varepsilon) 
\end{equation}
 for any two distinct points $x,y$ in $B_g(O,R_0)$. 
 We will always assume that $R_0$ is sufficiently small. In particular, we assume that \eqref{eq:epsilon} holds  with $\varepsilon= 1/2$.

 Throughout the paper the words "cube" and "box" (hyperrectangle)  will be used in the standard  Euclidean sense. The  reason why we need two metrics, but not one, is because we  will frequently partition cubes into smaller cubes and the combinatorial geometry ideas  are easier to describe  in $\mathbb{R}^n$ than on a manifold. We kindly advise the reader to think that $M$ is $\mathbb{R}^n$ to throw away half of the used notations and remove $\varepsilon$ error term in the monotonicity property for the frequency function defined below.

 Let $u$ be a harmonic function on $M$.
 Given a ball $B_g(x,r)$, define the function 
$$H(x,r)= \int\limits_{\partial B_g(x,r)} |u|^2 d S_r$$
where $S_r$ is the surface measure on $\partial B_g(x,r)$.

 We will use a slightly non-standard definition of the frequency function:
$$\beta(x,r) = \frac{rH'(x,r)}{2H(x,r)}.$$
Our definition is slightly different to the one in \cite{HL}, \cite{GL}, \cite{L}, in particular in the case of ordinary harmonic functions in $\mathbb{R}^n$  we don't normalize $H(r)$ by the total surface area $|S_r|$. Sometimes we will specify the dependence of $\beta$ and $H$ on $u$ and write $H_u(x,r)$ and $\beta_u(x,r)$.
  The frequency is almost monotonic  in the following sense:

For any $\varepsilon>0$ there exists $R_0 >0$ such that
if $r_1<r_2<R_0$ and $d_g(x,O)< R_0$, then
 $$ \beta(x,r_1) \leq \beta(x,r_2)(1+\varepsilon).$$
 See also Remark (3) to Theorem 2.2 in \cite{M}.
 
 It follows directly from the definition that 
\begin{equation} \label{eq:intbeta}
 H(x,r_2)/ H(x,r_1) =\exp( 2 \int_{r_1}^{r_2} \beta(x,r) d \log r )
\end{equation}
 and by the almost monotonicity property 
\begin{equation}
\left(\frac{r_2}{r_1}\right)^{2\beta(x,r_1)/(1+\varepsilon)} \leq \frac{H(x,r_2)}{H(x,r_1)} \leq   \left(\frac{r_2}{r_1}\right)^{2\beta(x,r_2)(1+\varepsilon)}.
\end{equation}
 
\section{A lemma on monotonic functions} 
\begin{lemma} \label{lem:mf}
 Let $f$ be a non-negative, monotonic and non-decreasing function on the interval $[a,b]$. Assume  that $f \geq e$ on this interval. Then there exist a point $x \in [a,\frac{a+b}{2})$ and $N\geq e$ such that 
\begin{equation} \label{eq:mf}
 N \leq f(t) \leq e N
\end{equation} 
for any  $t \in (x-\frac{b-a}{20\log^2 f(x)}, x + \frac{b-a}{20\log^2 f(x)})\subset [a,b].$
\end{lemma}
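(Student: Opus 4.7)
Let $\phi(t) = \log f(t)$. Then $\phi$ is non-decreasing, $\phi \geq 1$ on $[a,b]$, and with the substitution $r(x) = (b-a)/(20\,\phi(x)^2)$ the lemma is equivalent to finding $x \in [a,(a+b)/2)$ and an integer $m \geq 1$ such that $\phi(t) \in [m, m+1]$ for all $t$ in the window $(x - r(x), x + r(x)) \subset [a,b]$; one then sets $N = e^m$. The approach is a plain pigeonhole on the level sets of $\phi$ cut by the first half of the interval.

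For each integer $m \geq 1$ set
\[
I_m \;=\; \phi^{-1}\!\bigl([m,m+1)\bigr) \cap \bigl[a,(a+b)/2\bigr].
\]
Because $\phi$ is monotonic, each $I_m$ is a (possibly empty) sub-interval of $[a,(a+b)/2]$, the $I_m$ are pairwise disjoint, and, since $\phi\geq 1$, they cover the whole first half, so $\sum_{m\geq 1}|I_m| = (b-a)/2$. Suppose for contradiction that $|I_m| < (b-a)/(10\,m^2)$ for every $m \geq 1$. Summing and using $\sum_{m\geq 1} m^{-2} = \pi^2/6$ gives
\[
\frac{b-a}{2} \;=\; \sum_{m\geq 1} |I_m| \;<\; \frac{b-a}{10}\cdot\frac{\pi^2}{6} \;<\; \frac{b-a}{2},
\]
a contradiction. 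Hence there exists $m^* \geq 1$ with $|I_{m^*}| \geq (b-a)/(10\,(m^*)^2)$.

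Pick $x^*$ to be the midpoint of $I_{m^*}$. Then $\phi(x^*) \in [m^*, m^*+1)$, so
\[
r(x^*) \;=\; \frac{b-a}{20\,\phi(x^*)^2} \;\leq\; \frac{b-a}{20\,(m^*)^2} \;\leq\; \frac{|I_{m^*}|}{2},
\]
and consequently $(x^* - r(x^*), x^* + r(x^*)) \subset I_{m^*} \subset [a,(a+b)/2] \subset [a,b]$. On this window $\phi(t) \in [m^*, m^*+1)$, so $f(t) \in [e^{m^*}, e^{m^*+1}]$; setting $N = e^{m^*}$ yields $N \geq e$ and $N \leq f(t) \leq eN$ as required. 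The condition $x^* < (a+b)/2$ follows because $I_{m^*}$ has positive length and lies inside $[a,(a+b)/2]$. There is no serious obstacle here: the only thing to calibrate is the dyadic scale (the factor $e$ in the conclusion matches the unit-length partition of $\phi = \log f$), and the numerical inequality $\pi^2/60 < 1/2$ provides enough slack so that the midpoint choice automatically satisfies $2r(x^*) \leq |I_{m^*}|$.
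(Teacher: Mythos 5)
Your proof is correct. The only point that deserves a moment's care is the passage from the measure-theoretic statement $|I_{m^*}|\ge (b-a)/(10(m^*)^2)$ to the pointwise one, and you handle it: since $\phi=\log f$ is monotone, each $I_m$ is an interval, the open window of half-length $r(x^*)\le |I_{m^*}|/2$ centered at the midpoint lies in the open core of $I_{m^*}$, and the midpoint of a positive-length subinterval of $[a,(a+b)/2]$ is strictly less than $(a+b)/2$.

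Your route differs in presentation from the paper's, though it rests on the same numerical fact. The paper runs a greedy walk $x_1=a$, $x_{i+1}=x_i+\frac{b-a}{10\log^2 f(x_i)}$, observing that if the conclusion fails at the midpoint of $[x_i,x_{i+1}]$ with $N=f(x_i)$ then necessarily $f(x_{i+1})\ge ef(x_i)$; iterating gives $f(x_i)\ge e^i$, hence step sizes $\le \frac{b-a}{10 i^2}$, so the walk never exits $[a,(a+b)/2)$ while $f(x_i)\to\infty$, contradicting boundedness of $f$ there. You instead slice $[a,(a+b)/2]$ by the unit-width level sets $I_m=\{\,\log f\in[m,m+1)\,\}$ and pigeonhole: if all were thinner than $\frac{b-a}{10m^2}$, the total length would fall short of $(b-a)/2$ by the same convergent series $\sum m^{-2}$. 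The two arguments are dual viewpoints of the same estimate---the walk explicitly traverses the level sets, the pigeonhole just measures them---and the paper's walk must additionally check that the window around each midpoint stays inside the current step $[x_i,x_{i+1}]$ (which uses monotonicity of $\log^2 f$), a verification your static partition dispenses with. Your version is a little cleaner to state; the paper's is closer to the iterative spirit used elsewhere in the text.
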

\begin{proof} Without loss of generality we can assume $a=0, b=1.$
 Define a sequence of numbers $x_i \in  [0,1)$ such that $x_1=0$ and $x_{i+1}=  x_{i} + \frac{1}{10 \log^2 f(x_i)}$ as long as $x_{i+1}<1/2$. The sequence might be finite. Assume that \eqref{eq:mf} fails for $x=\frac{x_{i+1}+ x_i}{2}$ and $N= f(x_i)$, then $f(x_{i+1}) \geq e f(x_i)$. Assuming this for all such $x$,  we obtain $f(x_i) \geq e^i $. Hence  $x_{i+1} -  x_{i} = \frac{1}{10 \log^2 f(x_i)} \leq \frac{1}{10i^2}$. Since $ \sum_{i=1}^{\infty}\frac{1}{10 i^2} < 1/2$, we see that $x_i<1/2$ for all integer $i$ and $f(x_i) \leq f(1/2)$ while $f(x_i) \to \infty$ as $i \to \infty$. 

\end{proof}

We want to apply Lemma \ref{lem:mf} to a modified frequency function:
$$ \tilde \beta(p,r) := \sup_{t \in (0,r]} \beta(p,t).$$
 We note that $\tilde \beta$ is monotonic and 
 $\beta$ and $\tilde \beta$ are comparable due to almost monotonicity of the frequency:
\begin{equation} \label{eq:fm}
 \beta(p,r) \leq \tilde \beta(p,r) \leq  (1+\varepsilon)\beta(p,r), 
\end{equation}
if $B_g(p,r) \subset B_g(O,R_0)$, where $R_0= R_0(\varepsilon,O,M,g)$.
 Hereafter we will work in a small neighborhood of $O$ and always assume that \eqref{eq:fm} holds with $\varepsilon =1$.
 
\begin{lemma} \label{lem:nen}
  Consider a ball $B_g(p,2r) \subset B_g(O,R_0)$ and assume that $\beta(p,r/2) > 10$. Then there exists
 $s \in [r,\frac{3}{2}r)$ and $N\geq 5$ such that
 \begin{equation} \label{eq:nen}
N \leq \beta(p,t) \leq 2e N
\end{equation}
 for any $t \in (s(1 - \frac{1}{1000 \log^2 N}), s(1 + \frac{1}{1000 \log^2 N}))$.
\end{lemma}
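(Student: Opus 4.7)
My plan is to apply Lemma~\ref{lem:mf} to the monotone non-decreasing function $f(t) := \tilde\beta(p,t)$ on the interval $[a,b]=[r,3r/2]$, and then translate the conclusion from $\tilde\beta$ back to $\beta$ via the comparability \eqref{eq:fm}. This translation costs a multiplicative factor of $2$, so the $N$ produced by Lemma~\ref{lem:mf} has to be replaced by $N' := N/2$; the hypothesis $\beta(p,r/2) > 10$ is precisely what is needed to keep $N' \geq 5$.

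First I would check the hypotheses of Lemma~\ref{lem:mf}. The function $\tilde\beta(p,\cdot)$ is non-decreasing by construction, and $\tilde\beta(p,t) \geq \tilde\beta(p,r) \geq \beta(p,r/2) > 10 > e$ for every $t \in [r,3r/2]$. The lemma then supplies a point $s \in [r, 5r/4)$ and a number $N$ such that $N \leq \tilde\beta(p,t) \leq eN$ on an additive window around $s$ of half-width $\frac{r/2}{20\log^2 \tilde\beta(p,s)}$. A brief inspection of the proof of Lemma~\ref{lem:mf} shows that the valid values of $N$ are of the form $f(x_i)$ for some $x_i \geq a = r$, so by monotonicity $N \geq \tilde\beta(p,r) > 10$, whence $N' := N/2 > 5$.

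Feeding these $\tilde\beta$-bounds into \eqref{eq:fm} (with $\varepsilon=1$) yields $N/2 \leq \beta(p,t) \leq eN$, that is, $N' \leq \beta(p,t) \leq 2eN'$, which is \eqref{eq:nen}. The final step is to verify that the required multiplicative window of half-width $\frac{s}{1000\log^2 N'}$ sits inside the additive window just obtained: since $s < 5r/4$, $\tilde\beta(p,s) \leq eN$, and $N \geq 10$ makes $\log N$ and $\log N'$ comparable to within a bounded factor, the additive half-width dominates the multiplicative one with plenty of room, and the bounds transfer to the multiplicative window.

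The only delicate aspect is this constant-chasing: the gap between the constant $20$ in Lemma~\ref{lem:mf} and the constant $1000$ in the statement of Lemma~\ref{lem:nen} is evidently engineered to comfortably absorb the losses from $\tilde\beta \leftrightarrow \beta$, from $N \leftrightarrow N'$, and from the additive-to-multiplicative conversion, so no genuine difficulty arises.
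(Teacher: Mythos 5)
Your proof is correct and follows essentially the same route as the paper: apply Lemma~\ref{lem:mf} to $\tilde\beta(p,\cdot)$, translate to $\beta$ via \eqref{eq:fm} with $\varepsilon=1$ (halving the output $N$), use $N\geq\tilde\beta(p,r)\geq\beta(p,r/2)>10$ to keep the resulting constant $\geq 5$, and absorb the losses into the factor $1000$. The only cosmetic difference is that the paper applies Lemma~\ref{lem:mf} on $[r,2r]$ rather than $[r,3r/2]$; both yield $s\in[r,3r/2)$.
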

 \begin{proof}
 Indeed, we can apply Lemma \ref{lem:mf} for $\tilde \beta(p,t)$ on $[r,2r)$ and find such $s$ and $N $ that 
 $$ 2N<\tilde \beta(p,t) \leq 2eN$$
for $t \in (s - \frac{r}{20 \log^2 (2N)}, s + \frac{r}{20 \log^2 (2N)}).$ By \eqref{eq:fm} we have $N< \beta(p,t) \leq 2eN$ for $t$ on the same interval.

Since $\beta(p,r/2) > 10$ we have $2N\geq 10$. Recall that $s \in [r,2r)$. These two observations imply 
$$ (s(1 - \frac{1}{1000 \log^2 N}), s(1 + \frac{1}{1000 \log^2 N})) \subset (s - \frac{r}{20 \log^2 (2N)}, s + \frac{r}{20 \log^2 (2N)}).$$

 \end{proof}
 \section{Behavior near the maximum}\label{sec:maximum}
 In this section we study the behavior of a harmonic function in the spherical layer of width $\sim \frac{1}{\log^2 N}$ from Lemma  \ref{lem:nen}, where the frequency is comparable to $ N$. We will consider a sphere within this spherical layer and collect several estimates for growth of $u$ near the point, where the maximum is attained on that sphere. 

 The same notation as in Lemma  \ref{lem:nen} is used here: we consider a ball $B(p,2r) \subset B(O,R_0)$ with $\beta(p,r/2) \geq 10$ and a number $s\in[r,2r)$ such that the following holds.
For any $t$ in the interval $$I:=(s(1 - \frac{1}{1000 \log^2 N}), s(1 + \frac{1}{1000 \log^2 N}))$$ the frequency is estimated by $N< \beta(p,t) \leq 2eN$.  We will always assume that  $N$ is larger than $5$.

By $c,c_1,C,C_1,C_2 \dots$ we will denote positive constants that depend on $M,g,n,O,R_0$ only. These constants are allowed to vary from line to line.

 Consider the function $H(p,t)= \int_{\partial B_g(p,t)} u^2$. By \eqref{eq:intbeta} and \eqref{eq:nen} we have 
\begin{equation} \label{eq:t1t2}
 (t_2/t_1)^{ 2N} \leq \frac{H(p,t_2)}{H(p,t_1)} \leq (t_2/t_1)^{4e N}.
\end{equation}
for any $t_1 <t_2$  in $I$.

 Consider a point $x$ on $\partial B_g(p,s)$ such that the maximum of $|u|$ on  $\overline B_g(p,s)$ is attained at $x$ and define $K=|u(x)|$.
 Let us fix numbers 
\begin{equation} \label{eq:s1s2}
A=10^6, \delta \in  [ \frac{1}{A \log^{100} N}, \frac{1}{A \log^2 N}] , s_{-\delta}= s(1 - \delta ), s_{\delta}=s (1 + \delta ).
\end{equation}
 Note that $s_{-\delta}<s<s_{\delta}$ and $\delta <1/10^6$.
\begin{lemma} There exist  $c > 0$ and $C>0$, depending on $M,g,n,O,R_0$ only, such that
\begin{equation} \label{eq:s-}
\sup\limits_{B_g(p, s_{-\delta})} |u| \leq C K 2^{-c\delta N},
\end{equation}
\begin{equation} \label{eq:s+}
\sup\limits_{B_g(p, s_{\delta})} |u| \leq C K 2^{C\delta N}.
\end{equation}
\end{lemma}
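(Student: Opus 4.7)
The plan is to use three ingredients in concert: (a) a sub-mean-value inequality for $u^2$, which is subharmonic since $\Delta(u^2)=2|\nabla u|^2 \geq 0$ when $u$ is harmonic; (b) the two-sided power-type bound \eqref{eq:t1t2}, valid on the frequency interval $I$, which lets us transport $H(p,\cdot)$ between nearby radii; and (c) the elementary estimate $H(p,s) \leq C K^2 s^{n-1}$, which holds because $K$ is the maximum of $|u|$ on the sphere $\partial B_g(p,s)$. The bridge from a sup norm on a ball to the quantity $H(p,\cdot)$ is the sub-mean-value inequality $|u(y)|^2 \leq C r^{-n} \int_{B_g(y,r)} u^2$, which I would apply after rewriting the integration in geodesic shells to get $\int_{B_g(p,R)} u^2 \leq C R\, H(p,R)$ (using monotonicity of $H$ in $R$).

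For \eqref{eq:s+}, I would pick any $y \in \overline{B_g(p, s_\delta)}$ and apply the sub-mean-value inequality on $B_g(y, \delta s)$; because $A = 10^6 \gg 1000$, the enclosing ball $B_g(p, s(1+2\delta))$ still lies inside the frequency interval $I$, so the upper half of \eqref{eq:t1t2} yields
\[
H(p, s(1+2\delta)) \leq H(p,s)(1+2\delta)^{4eN} \leq H(p,s)\cdot 2^{C\delta N}.
\]
Chaining these estimates gives $|u(y)|^2 \leq C K^2 \delta^{-n}\cdot 2^{C\delta N}$, from which \eqref{eq:s+} follows. For \eqref{eq:s-}, the template is identical, but I would shrink the auxiliary radius to $r=\delta s/2$ so that $B_g(y,r)\subset B_g(p,s(1-\delta/2))$, and invoke the lower half of \eqref{eq:t1t2}:
\[
H(p,s(1-\delta/2)) \leq H(p,s)(1-\delta/2)^{2N} \leq H(p,s)\cdot 2^{-c\delta N}.
\]
The rest proceeds verbatim.

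The only delicate point -- and what I expect to be the main obstacle -- is absorbing the polynomial factor $\delta^{-n}$ coming from the sub-mean-value inequality into the target exponential $2^{\pm c\delta N}$. This is precisely where the range \eqref{eq:s1s2} does its work: the lower bound $\delta \geq 1/(A\log^{100} N)$ gives $\log(\delta^{-n}) = O(\log\log N)$, while $\delta N \geq N/(A\log^{100} N)$ dominates this once $N$ is large (and for $N$ comparable to $5$, $\delta^{-n}$ is bounded by a constant that can be absorbed into $C$). Consequently one may swallow $\delta^{-n}$ into $2^{c_0\delta N}$ at the cost of shrinking $c$ or enlarging $C$ in the conclusion, with all the other constants depending only on $M,g,n,O,R_0$ as required.
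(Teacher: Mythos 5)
Your proposal is correct and follows essentially the same chain of inequalities as the paper's own proof, just run in the opposite direction: the paper works from $K^2 \geq C_1 s^{-(n-1)}H(p,s)$ down to $\tilde K^2$ (where $\tilde K=\sup_{\partial B_g(p,s_{-\delta})}|u|$) to obtain $K^2\geq c\delta^n(1+\delta/2)^{2N}\tilde K^2$, while you go from $\tilde K^2$ up to $K^2$ via the sub-mean-value inequality for $u^2$. Both arguments use the same ingredients --- the $L^2$--$L^\infty$ comparison on $\partial B_g(p,s)$, the shell integral bound $\int_0^R H(p,r)\,dr\leq R\,H(p,R)$, the two-sided frequency bound \eqref{eq:t1t2} to move between radii, and a mean-value step --- and end with the same absorption of the polynomial factor $\delta^{\pm n}$ into the exponential using the lower bound on $\delta$ from \eqref{eq:s1s2}.
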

\begin{proof}
 We will prove only \eqref{eq:s-}, the same argument works for the second inequality \eqref{eq:s+}.

 By the standard estimate of $L^2$ norm of a function by $L^\infty$ norm and by \eqref{eq:t1t2} we have
$$ K^2 \geq C_1 s^{-n+1}H(p,s) \geq C_1 s^{-n+1} H(p,s_{-\delta/2}) (1+\delta/2)^{2N}.$$
  We need an estimate that compares $L^2$-norm of a harmonic function on the boundary of a ball and $L^2$-norm in the ball:

$$s H(p, s_{-\delta/2})= s \int_{\partial B_g(p,s_{-\delta/2})}|u|^2 \geq  C_1 \int_{B_g(p,s_{-\delta/2})}|u|^2.$$
 Let $\tilde x$ be a point  on $\partial B_g(p, s_{-\delta})$, where the maximum is attained. Define $\tilde K = |u(\tilde x)|$.
 Since the volume $|B_g( \tilde x, \frac{\delta}{2} s)| \geq C_2(\delta s)^n$, we have

$$\int_{B_g(p,s_{-\delta/2})}|u|^2 \geq \int_{B_g( \tilde x, \frac{\delta}{2} s)} |u|^2 \geq C_2 (\delta s)^n\dashint_{B_g( \tilde x, \frac{\delta}{2} s)} |u|^2.$$

One can estimate the value of a harmonic function $u$ in the center of a ball by a constant multiple of the average of $|u|$ over the ball, so 
$$\dashint_{B_g( \tilde x, \frac{\delta}{2} s)} |u|^2 \geq (\dashint_{B_g( \tilde x, \frac{\delta}{2} s)} |u| )^2 \geq C_3 |u|^2 ( \tilde x)= C_3 \tilde K^2.$$
 Combining the estimates above  one has 
\begin{equation} \label{eq:MtM}
 K^2 \geq C_4 \delta^{n}(1+\delta/2)^{2N} \tilde K^2.
\end{equation}
Note that $\log (1+\delta/2) \geq \delta/4$ for $\delta \in(0,1/10^6)$, so  
$$(1+\delta/2)^{2N} \delta^n \geq \exp(N \delta/2 + n \log \delta ) = \exp(N \delta/4) \exp(N \delta/4 + n \log \delta ).$$
Using that $\delta \in  [ \frac{1}{A \log^{100} N}, \frac{1}{A \log^2 N}]$ it is easy to show that  $$C_5+N \delta/4 + n \log \delta>0$$ for sufficiently large $C_5=C_5(n)$. Thus
$K^2 \geq C_6 \exp(N \delta/4) \tilde K^2$.

\end{proof}
Now, we can estimate the doubling index near $x$. Define $\N(x,r)$ by
$$2^{\N(x,r)}=  \frac{\sup\limits_{B_g(x,2r)}|u|}{\sup\limits_{B_g( x, r)}|u|}.$$
 One can estimate the growth of a harmonic function in terms of  the doubling index. For any $\varepsilon$ there exist $R_0>0$ and $C>0$ such that 
 for positive numbers $r_1,r_2$ are such that $2r_1\leq r_2$ and $B_g(x,r_2)\subset B_g(O,R_0) $ the following well-known inequality holds (see \cite{LM2}):
\begin{equation} \label{eq:eNe}
\left(\frac{r_2}{r_1}\right)^{\N(x,r_1)(1-\varepsilon) - C} \leq \frac{\sup_{B_g(x,r_2)}|u|}{\sup_{B_g(x,r_1)}|u|} \leq   \left(\frac{r_2}{r_1}\right)^{\N(x,r_2)(1+\varepsilon) +C}.
\end{equation}
 In particular, the doubling index is almost monotonic in the following sense:
 $$\N(x,r_1)(1-\varepsilon) - C \leq \N(x,r_2)(1+\varepsilon) +C.$$

\begin{lemma} There exists $C=C(M,g,n,O,R_0)>0$ such that 
\begin{equation}\label{eq:max2}
\sup\limits_{B_g(x,\delta s)}|u| \leq K 2^{C\delta N + C}
\end{equation}
and  for any $\tilde x$ with $d(x,\tilde x) \leq \frac{\delta}{4} s$
\begin{equation}\label{eq:max1}
\N(\tilde x, \frac{\delta}{4} s) \leq   C\delta N + C,
\end{equation}
\begin{equation}\label{eq:max3}
\sup\limits_{B_g(\tilde x,\frac{\delta s}{10N} )}|u| \geq K 2^{-C\delta N \log N - C}. 
\end{equation}
\end{lemma}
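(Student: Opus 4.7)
The plan is to prove the three estimates in the order \eqref{eq:max2}, \eqref{eq:max1}, \eqref{eq:max3}, each one relying on the previous and on the growth bound \eqref{eq:s+} together with the almost monotonicity of the doubling index \eqref{eq:eNe}.

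First, to prove \eqref{eq:max2}, I would simply observe that since $x \in \partial B_g(p,s)$, the triangle inequality gives
\[
B_g(x,\delta s) \subset B_g(p, s+\delta s) = B_g(p, s_\delta),
\]
so \eqref{eq:s+} immediately yields $\sup_{B_g(x,\delta s)}|u|\le CK\,2^{C\delta N}$, which absorbs the constant $C$ into the exponent as $2^{C\delta N + C}$.

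Next, for \eqref{eq:max1}, I want to sandwich the doubling index from above. Fix $\tilde x$ with $d(\tilde x, x)\le \tfrac{\delta}{4}s$. For the \emph{numerator} of $2^{\N(\tilde x, \delta s/4)}$, the inclusion $B_g(\tilde x, \tfrac{\delta}{2} s) \subset B_g(x, \tfrac{3\delta}{4}s)\subset B_g(x,\delta s)$ and \eqref{eq:max2} give $\sup_{B_g(\tilde x, \delta s/2)}|u| \le K\,2^{C\delta N + C}$. For the \emph{denominator}, the point $x$ lies in the closure of $B_g(\tilde x, \tfrac{\delta}{4}s)$, so by continuity $\sup_{B_g(\tilde x, \delta s/4)}|u| \ge |u(x)| = K$. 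Taking the base-$2$ logarithm of the ratio immediately yields $\N(\tilde x, \tfrac{\delta}{4}s) \le C\delta N + C$.

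Finally, for \eqref{eq:max3}, I apply the doubling-index growth estimate \eqref{eq:eNe} on the scales $r_1 = \tfrac{\delta s}{10N}$ and $r_2 = \tfrac{\delta s}{4}$, with $r_2/r_1 = 5N/2$. Using the upper bound for the doubling index at scale $r_2$ just obtained in \eqref{eq:max1}, one gets
\[
\frac{\sup_{B_g(\tilde x, \delta s/4)}|u|}{\sup_{B_g(\tilde x, \delta s/(10N))}|u|} \le \left(\tfrac{5N}{2}\right)^{(C\delta N + C)(1+\varepsilon)+C}.
\]
Rewriting $(5N/2)^{C\delta N + C}$ as $2^{(C\delta N+C)\log_2(5N/2)} \le 2^{C'\delta N \log N + C'}$ and again bounding the numerator from below by $K$ (the argument used for \eqref{eq:max1}, since $x$ is also in the closure of $B_g(\tilde x, \tfrac{\delta}{4} s)$) gives the desired $\sup_{B_g(\tilde x, \delta s/(10N))}|u| \ge K\,2^{-C\delta N\log N - C}$.

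The main obstacle is bookkeeping rather than an idea: all three inequalities follow once one has (a) the one-sided growth control \eqref{eq:s+} along a shell that is thick compared to the radius at which maxima are compared, and (b) the monotonicity/growth dictionary \eqref{eq:eNe} between suprema on concentric balls. The slight subtlety is to choose the intermediate radius (here $\tfrac{\delta}{4}s$ and $\tfrac{\delta}{2}s$) so that the concentric inclusions still fit inside $B_g(p,s_\delta)$, which is where the admissible range $\delta \in [\tfrac{1}{A\log^{100}N}, \tfrac{1}{A\log^2 N}]$ is used only to ensure $\delta < 1/10^6$ and that we stay within the shell $I$; no finer use of the upper bound on $\delta$ is needed here.
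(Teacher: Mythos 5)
Your argument follows the paper's own proof almost line for line: the inclusion $B_g(x,\delta s)\subset B_g(p,s_\delta)$ for \eqref{eq:max2}, the sandwich $\sup_{B_g(\tilde x,\delta s/2)}|u| \le \sup_{B_g(x,\delta s)}|u|$ and $\sup_{B_g(\tilde x,\delta s/4)}|u| \ge K$ for \eqref{eq:max1}, and \eqref{eq:eNe} applied at radii $\tfrac{\delta s}{10N}$ and $\tfrac{\delta s}{4}$ for \eqref{eq:max3}. No complaints about the structure.

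The one place where you have elided a step is the final inequality
\[
\left(\tfrac{5N}{2}\right)^{C\delta N + C} \le 2^{C'\delta N \log N + C'}.
\]
Taking logarithms, the left side is $C\delta N\log_2(5N/2) + C\log_2(5N/2)$; the second term is of order $\log N$, not of order $1$, so it cannot simply be absorbed into the additive constant $C'$ for \emph{all} $N \ge 5$. To make this absorption legitimate one must use the lower bound $\delta \ge \tfrac{1}{A\log^{100}N}$: for $N$ large this gives $\delta N \ge 1$, hence $\log N \le \delta N\log N$, and the extra $\log N$ is dominated by a larger multiple of $\delta N\log N$; for $N$ in a bounded range, $\log N$ is itself bounded. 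The paper states this explicitly ("In the last inequality we used that $\delta\in[\tfrac{1}{A\log^{100}N},\tfrac{1}{A\log^2 N}]$"). Your closing remark that the admissible range of $\delta$ is used only to ensure $\delta<1/10^6$ and to stay inside the shell is therefore not quite right: the \emph{lower} bound on $\delta$ is genuinely needed precisely at this step. This is a bookkeeping gap rather than a conceptual one, but as written the last inequality is unjustified.
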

\begin{proof}
 The first estimate \eqref{eq:max2} immediately follows from \eqref{eq:s+} since $B_g(x,\delta s) \subset B_g(p,s(1+\delta))$.

 To establish \eqref{eq:max1} we note that  $$2^{\N(\tilde x, \frac{\delta}{4} s)} = \frac{\sup\limits_{B_g(\tilde x,\delta s/2)}|u|}{\sup\limits_{B_g(\tilde x,\delta s/4)}|u|} \leq \frac{\sup\limits_{B_g(x,\delta s)}|u|}{K} \leq 2^{C\delta N+ C}.$$
 It remains to obtain \eqref{eq:max3}. We will use \eqref{eq:max1} and almost monotonicity \eqref{eq:eNe} of the doubling index:
$$ \frac{\sup\limits_{B_g(\tilde x,\delta s/4)}|u|}{\sup\limits_{B_g(\tilde x,\frac{\delta s}{10N} )}|u|} \leq (40N)^{C_1\N(\tilde x, \frac{\delta}{4} s) + C_1} \leq  2^{C_2 \delta N \log N + C_2 \log N}  \leq 2^{C_3 \delta N \log N + C_3}.$$
In the last inequality we used that $\delta \in  [ \frac{1}{A \log^{100} N}, \frac{1}{A \log^2 N}]$.
Since $\sup\limits_{B_g(\tilde x,\delta s/4)}|u| \geq |u|(x)=K$, the proof of  \eqref{eq:max3} will be completed if we take $C = C_3$.
\end{proof} 
\section{Number of cubes with big doubling index}
 Given a cube $Q$, we will denote $$\sup\limits_{x\in Q, r \leq \textup{diam}(Q)} \log \frac{\sup\limits_{B_g(x,10 n \cdot r)}|u|}{\sup\limits_{B_g(x, r)}|u|}$$  by $N(Q)$ and call it the doubling index of $Q$. This definition is  different than a doubling index for balls but more convenient in the following sense. If a cube $q$ is contained in a cube $Q$, then $N(q)\leq N(Q)$. Furthermore if a cube $q$ is covered by  cubes $Q_i$ with $diam(Q_i) \geq diam(q)$, then $N(Q_i)\geq N(q)$ for some $Q_i$.  

  The following result was proved in \cite{LM2}, where it was applied to upper estimates of the volume of nodal sets. However this result appears to be useful for lower bounds as well.
 \begin{theorem} \label{th1/2}
  There exist  a constant $c>0$ and an integer $A>1$, depending on the dimension $n$ only, and positive numbers $N_0=N_0(M,g,n,O)$, $R=R(M,g,n,O)$ such that for any cube $Q\subset B(O,R)$ the following holds: if we partition $Q$ into $A^n$ equal subcubes, then the number of subcubes with doubling index greater than $\max(N(Q)/(1+c),N_0)$ is less than $\frac{1}{2} A^{n-1}$.
\end{theorem}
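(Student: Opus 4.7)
The plan is to argue by contradiction, combining Hadamard's three-balls inequality for harmonic functions with a combinatorial argument on the grid of $A^n$ subcubes. Let $N = N(Q)$, and call a subcube \emph{bad} if its doubling index exceeds $N/(1+c)$. Suppose at least $\tfrac12 A^{n-1}$ subcubes are bad. By the definition of $N(Q)$ there exist $x_0\in Q$ and $R$ comparable to $\textup{diam}(Q)$ with $\sup_{B_g(x_0, 10nR)}|u| / \sup_{B_g(x_0, R)}|u| = 2^N$; normalize so that $\sup_{B_g(x_0, 10nR)}|u|=1$, hence $\sup_{B_g(x_0, R)}|u|=2^{-N}$. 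Simultaneously, on any bad subcube $q$ the hypothesis $\N(q) > N/(1+c)$ yields $\sup_q|u|\leq 2^{-N/(1+c)+O(1)}$.

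The analytic workhorse is the three-balls inequality, equivalent to the almost monotonicity \eqref{eq:eNe}: for $B_g(x,r_1)\subset B_g(x,r_2)\subset B_g(x,r_3)$ inside $B_g(O,R_0)$,
$$\sup_{B_g(x,r_2)}|u| \leq C \bigl(\sup_{B_g(x,r_1)}|u|\bigr)^{\alpha}\bigl(\sup_{B_g(x,r_3)}|u|\bigr)^{1-\alpha},$$
with $\alpha\in(0,1)$ depending only on the ratios $r_i/r_j$. Iterating this along a chain of overlapping balls of comparable size propagates a factor $2^{-N/(1+c)}$ from one end to a factor $2^{-N\theta^{m}/(1+c)}$ at $m$ steps away, for a fixed $\theta=\theta(n)\in(0,1)$. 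The combinatorial task is thus to cover a substantial portion of $B_g(x_0, R)$ by such chains anchored at bad subcubes.

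For this, one pigeonholes on the lattice $\{1,\ldots,A\}^n$: since the bad set has size on the order of a single $(n-1)$-dimensional hyperplane section of the grid, in some coordinate direction $e_i$ and some sub-box one finds positive-density concentrations of bad subcubes along axis-parallel lines in direction $e_i$. Applying the one-dimensional three-balls propagation along each such line, followed by a transverse three-balls through the resulting dense slab of propagated small values, one obtains a bound of the form $\sup_{B_g(x_0, R)}|u|\leq 2^{-N(1+c')/(1+c)}$ with $c'=c'(n)>0$. Choosing $c$ small enough relative to $c'$ makes $(1+c')/(1+c) > 1$, which contradicts $\sup_{B_g(x_0, R)}|u|=2^{-N}$.

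The main obstacle is the combinatorial step. The count $\tfrac12 A^{n-1}$ is tuned exactly to the size of one hyperplane of subcubes, and naive pigeonholes (e.g.\ via Loomis--Whitney) give projections of size only $\sim A^{(n-1)^2/n}$, insufficient by themselves to drive a full $n$-dimensional propagation. The right argument must exploit that a bad set of this size cannot be confined to a single hyperplane—it must extend transversely—and convert this extension into a geometric configuration supporting iterated three-balls with all constants depending only on $n$. This delicate matching of combinatorial density to the gain from three-balls, together with the requirement that $A$ and $c$ depend only on the dimension (and not on $M, g, O$), is the heart of the proof; the $O, M, g$ dependence is confined to the threshold $N_0$ and the radius $R$, below which the error terms in \eqref{eq:eNe} can be absorbed.
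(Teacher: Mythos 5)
The paper does not contain a proof of Theorem~\ref{th1/2}: it is explicitly borrowed from~\cite{LM2}, and the surrounding text states this is the one non-self-contained ingredient. So there is no in-paper argument to compare against; what follows is a comparison with the argument in~\cite{LM2}.

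Your opening reductions are faulty. The quantity $N(Q)$ is a supremum over \emph{all} scales $r\le\textup{diam}(Q)$, so you cannot assume the scale $R$ realizing it is comparable to $\textup{diam}(Q)$; it may be realized at an arbitrarily small scale. Likewise, $\N(q)>N/(1+c)$ on a bad subcube is a statement about a \emph{ratio} of suprema at two nearby scales near some point of $q$, not an absolute size; under the normalization you propose it does not follow that $\sup_q|u|\le 2^{-N/(1+c)+O(1)}$. To extract anything pointwise one must first push the large doubling index up to a common scale via almost-monotonicity, and even then one gets a lower bound on a growth ratio rather than an upper bound on a supremum.

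More fundamentally, you correctly identify that the combinatorial step is unresolved, and I do not see how to close it within the ``iterated three-balls along chains'' framework you propose: each application of the three-sphere inequality multiplies the exponent by a fixed $\theta<1$, so crossing a chain of length $\sim A$ degrades the gain to $\theta^{A}$, which is far too weak for any $A$ and $c$ depending only on $n$. The proof in~\cite{LM2} avoids this loss entirely. It rests on a \emph{simplex lemma} --- if the doubling index is $\geq N$ at each vertex of a uniformly non-degenerate simplex, then the doubling index at the barycenter at a comparable scale is $\geq(1+c)N$, with $c>0$ depending only on the dimension and the non-degeneracy constant --- together with a companion \emph{hyperplane lemma} treating the degenerate case where all the high-doubling points lie in a thin slab. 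The combinatorial dichotomy is then precisely tuned to the threshold $\tfrac12A^{n-1}$: among that many bad subcubes, either one can extract $n+1$ centers forming a uniformly non-degenerate simplex (apply the simplex lemma), or the bad cubes concentrate near a single hyperplane (apply the hyperplane lemma); each branch forces the doubling index of a comparable cube to exceed $N(Q)$, a contradiction once $c$ is chosen small enough relative to the gains in the two lemmas. This dichotomy, not a Loomis--Whitney or line-by-line pigeonhole, is the mechanism. The missing idea in your plan is the simplex lemma, which is the analytic tool designed to replace inefficient chaining.
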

 Further we will partition the cube $Q$ into $A^{nk}$ subcubes ($k$ will tend to infinity) and iterate the Theorem \ref{th1/2} for the subcubes.

\textbf{Notations.} Let $A>1$ be the integer from Theorem \ref{th1/2}.
Given an  Euclidean $n$-dimensional cube $Q$ , we  
partition $Q$ into $A^n$ equal subcubes with $1/A$ smaller size than $Q$,  we denote these cubes by $Q_{i_1}$, $i_1=1,2, \dots, A^n$, then partition each  $Q_{i_1}$ into $A^n$ equal subcubes $Q_{i_1,i_2}$, $i_2=1,2, \dots, A^n$ and so on... 
 The collection of all subcubes $Q_{i_1,i_2,\dots, i_k}$ of all sizes we denote by $\mathbb{A}$.

 By $C^{i}_k$ we denote the binomial coefficients  $\frac{k!}{i! (k-i)!}$.

Let $j_1, j_2, j_3 \dots$ be i.i.d. random variables such that
$$\mathbb{P}(j_k = i ) = 1/A^n \textup{  for  } i=1,2, \dots, A^n. $$

 We make a remark that we use the probabilistic notations  because they are simpler than writing "the number of subcubes with..."

 \begin{lemma} \label{l:iterations}
 Let  $c$,$N_0$ be positive numbers.
 Let $N$ be a function from  the set of subcubes $\mathbb{A}$ to $\mathbb{R}_+$ with the following properties.
\begin{itemize}
\item[(i)] N is monotonic with respect to inclusion:
 if $q_1,q_2 \in \mathrm{Q}$ and $q_1 \subset q_2$, then $N(q_1) \leq N(q_2).$
\item[(ii)]  
 $$  \mathbb{P}\left( N(\tilde Q_{j_1})\geq \max(\frac{N(\tilde Q)}{1+c},N_0) \right) \leq \frac{1}{2A} $$
for any cube $\tilde Q \in \mathbb{A}$.
\end{itemize}
Then for any integers $l,k$  with $0\leq l \leq k$, $k \geq 1$ the following holds
\begin{equation} \label{eq:ind}
\mathbb{P}\left( N(Q_{j_1,j_2, \dots,j_{k-1}, j_k})\leq \max(\frac{N(Q)}{(1+c)^{l}},N_0) \right)\geq 
\sum\limits_{i=l}^{k} C^{i}_k\left(\frac{1}{2A}\right)^{k-i}\left(1-\frac{1}{2A}\right)^i
\end{equation}
and for any $\varepsilon>0$ there exist $\sigma>0$ and an integer $k_0$ such that 
\begin{equation} \label{eq:eps}
 \mathbb{P}\left( N(Q_{j_1,j_2, \dots, j_k})\geq \max(\frac{N(Q)}{(1+c)^{\sigma k/\log k}},N_0) \right) \leq  \left(\frac{1}{2A}\right)^{k(1-\varepsilon)}
\end{equation}
for all positive integers $k>k_0$.
 \end{lemma}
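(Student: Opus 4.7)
The plan is to interpret the random variables $j_1, j_2, \ldots$ as driving a decreasing stochastic process $Y_k := N(Q_{j_1,\ldots,j_k})$ and to reduce both inequalities to binomial tail estimates. The essential observation is that hypothesis (ii), applied to the (random) cube $Q_{j_1,\ldots,j_{k-1}}$, provides a uniform lower bound on the conditional probability of a ``good'' step, so that the total number of good steps in $k$ trials stochastically dominates a binomial variable.

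For \eqref{eq:ind}, set $p := 1/(2A)$, let $\mathcal{F}_k$ be the $\sigma$-algebra generated by $j_1,\ldots,j_k$, and define the success indicator $\xi_k := \mathbf{1}\bigl\{Y_k < \max(Y_{k-1}/(1+c), N_0)\bigr\}$. Hypothesis (ii) yields $\mathbb{P}(\xi_k = 1 \mid \mathcal{F}_{k-1}) \geq 1 - p$. A short case analysis using monotonicity (hypothesis (i)) shows that if $S_k := \sum_{i=1}^k \xi_i \geq l$, then $Y_k \leq \max(Y_0/(1+c)^l, N_0)$: either some $Y_{k^\ast}$ falls below $N_0$ and monotonicity keeps $Y_j < N_0$ for all $j \geq k^\ast$, or every success strictly divides by $1+c$ and one obtains $Y_k < Y_0/(1+c)^{S_k} \leq Y_0/(1+c)^l$. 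A standard sequential coupling of the $\xi_k$ with i.i.d.\ Bernoulli$(1-p)$ variables then shows that $S_k$ stochastically dominates Binomial$(k, 1-p)$, which is exactly \eqref{eq:ind}.

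For \eqref{eq:eps}, apply \eqref{eq:ind} with $l := \lceil \sigma k / \log k \rceil$; the complementary probability is bounded by the left binomial tail $T := \sum_{i=0}^{l-1} \binom{k}{i}(1-p)^i p^{k-i}$. Since $l \ll k(1-p)$ for large $k$, the summand is increasing in $i$ on $\{0,\ldots,l-1\}$, so $T \leq l\binom{k}{l}(1-p)^l p^{k-l} \leq l(ek/l)^l p^{k-l}$ using $\binom{k}{l} \leq (ek/l)^l$. Substituting $l = \sigma k/\log k$ and taking logarithms, the target $T \leq p^{k(1-\varepsilon)}$ reduces to $\log l + l \log(ek/l) \leq (\varepsilon k - l)\log(1/p)$; the left side is $O(\sigma k \log\log k / \log k) = o(k)$ while the right side is $(1+o(1))\varepsilon k \log(1/p) = \Theta(k)$, so the bound holds for any fixed $\sigma > 0$ and $\varepsilon > 0$ once $k \geq k_0(\varepsilon, \sigma)$. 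The only real subtlety is the case analysis around the floor $N_0$ in \eqref{eq:ind}; the binomial estimates themselves are routine.
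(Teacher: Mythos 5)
Your proof is correct, and it takes a genuinely different route from the paper. For \eqref{eq:ind}, the paper works directly with probabilities: it partitions the sample space into events $E_{l,k}$ according to which dyadic range $N(Q_{j_1,\ldots,j_k})$ lands in, and proves the binomial lower bound by an explicit induction on $k$ involving a double-sum manipulation. You instead recast the recursive structure as a stochastic-dominance statement: the success count $S_k$ for a process with conditional success probability $\geq 1-p$ dominates $\mathrm{Binomial}(k,1-p)$, which you get from a standard sequential coupling. This is cleaner and more modular (the combinatorial work is outsourced to a known coupling lemma rather than redone by hand), and it has the incidental advantage of yielding the \emph{strict} inclusion $\{S_k \geq l\}\subset\{Y_k < \max(Y_0/(1+c)^l,N_0)\}$ for $l\geq 1$, which removes the tiny $>$ versus $\geq$ mismatch that one has to glide over in the paper when passing from \eqref{eq:ind} to \eqref{eq:eps}. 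For \eqref{eq:eps}, the paper bounds $\binom{k}{i}\leq k^l$ and needs $\sigma < \tfrac{\varepsilon}{3}\log(1/p)$ to close the estimate; you use the sharper $\binom{k}{l}\leq (ek/l)^l$, which makes the left-hand side $O(\sigma k\log\log k/\log k)=o(k)$ and so works for \emph{any} fixed $\sigma>0$. Both give the statement as claimed, since the lemma only asserts existence of some $\sigma$. One small thing to make fully precise in a write-up: the coupling step (construct auxiliary uniforms so that $\mathbf{1}\{U_i\leq 1-p\}\leq\xi_i$) and the monotonicity of the summand $\binom{k}{i}(1-p)^ip^{k-i}$ on $i\leq l-1$ both deserve a line of justification, but neither is a gap.
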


 Before we start the proof of  Lemma \ref{l:iterations} we give some informal explanations.

 \textbf{Heuristics.} Let $p=\frac{1}{2A}$. 
Suppose we have $k$ i.i.d variables $y_i$, each  $y_i$ takes value $0$ with probability
  $p$ and value $1$ with probability $(1-p)$, then 
 $$\mathbb{P}(\sum_{i=1}^k y_i \geq l) =  \sum\limits_{i=l}^{k} C^{i}_k p^{k-i}\left(1-p\right)^i  $$
for $0\leq l \leq k $.

Suppose now that $y_i$ are independent variables, each  $y_i$ takes only  two values $0$ and $1$, 
$\mathbb{P}(y_i=0) \leq p$ and $\mathbb{P}(y_i=1) \geq 1-p$. Now, $y_i$ are not assumed to be identically distributed.
Then 
\begin{equation} \label{eq:binom}
\mathbb{P}(\sum_{i=1}^k y_i \geq l) \geq  \sum\limits_{i=l}^{k} C^{i}_k p^{k-i}\left(1-p\right)^i.
\end{equation}
 The proof of  \eqref{eq:ind} is parallel to \eqref{eq:binom} with the exception  that we have to always  add words ``or smaller than $N_0$''.
 Namely, starting with a cube $ Q_{i_1,\dots, i_k}$ and choosing randomly its subcube 
$ Q_{i_1,\dots, i_k,j_{k+1}}$ the doubling index of the latter is either  $(1+c)$ times smaller than the doubling index of $ Q_{i_1,\dots, i_k}$ or smaller than $N_0$ 
with probability at least $1-p$.

 The inequality \eqref{eq:eps} will be proved with the help of the following fact.

 \textbf{Claim.} Let $p\in(0,1)$ be a fixed number. Then for any $\varepsilon>0$ there is $\sigma>0$ and $k_0>0$ such
 that  
\begin{equation}\label{eq:claim}
\sum\limits_{i=0}^{l-1} C^{i}_k p^{k-i}\left(1-p\right)^i \leq p^{k(1-\varepsilon)}
\end{equation}
 for any $k>k_0$ and $l\in [0, \sigma k/\log k]$.
\begin{proof}[Proof of the claim]
 Note that $C^{i}_k \leq k^l$ for $i \leq l$. Hence
$$\sum\limits_{i=0}^{l-1} C^{i}_k p^{k-i}\left(1-p\right)^i \leq l k^l p^{k-l}.$$
 It sufficient to choose $\sigma>0$ so that 
$$ l k^l p^{k-l} \leq  p^{k(1-\varepsilon)}$$
for large $k$, which is equivalent to
 $$  l k^l (1/p)^l \leq (1/p)^{\varepsilon k }.$$
 Since $l \leq \sigma k/\log k$, we have
$$ l \leq e^{\log k} \leq (1/p)^{\frac{\varepsilon}{3} k }, \quad k^l \leq e^{\sigma k} \leq (1/p)^{\frac{\varepsilon}{3} k } , \quad (1/p)^l \leq (1/p)^{\sigma k/ \log k} \leq (1/p)^{\frac{\varepsilon}{3} k }$$
for $k$ large enough and $\sigma<  \frac \varepsilon 3 \log (1/p)$.
 Multiplying the three inequalities above we finish the proof of the claim.
\end{proof}
\begin{proof}[Proof of Lemma \ref{l:iterations}]
 We are going to prove inequality   \eqref{eq:ind} by induction on $k$.
 For $k=1$ it is true due to assumption (ii). Assume that \eqref{eq:ind} holds for $(k-1)$ in place of $k$ and we want to show that   \eqref{eq:ind} holds for $k$. For $k>l>0$ define the disjoint events $$E_{l,k}:=\left\{ N(Q_{j_1,j_2, \dots,j_{k-1}, j_k})\in \left(\max(\frac{N(Q)}{(1+c)^{l+1}},N_0),\max(\frac{N(Q)}{(1+c)^{l}},N_0) \right]\right\},$$  
   $$E_{0,k}:= \left\{ N(Q_{j_1,j_2, \dots,j_{k-1}, j_k})\in \left( \max(\frac{N(Q)}{(1+c)},N_0),\max(N(Q),N_0) \right]\right\},$$
$$E_{k,k}:= \left\{ N(Q_{j_1,j_2, \dots,j_{k-1}, j_k}) \leq  \max(\frac{N(Q)}{(1+c)^k},N_0)\right\}.$$

 If $l<0$ or $l>k$ we will denote by $E_{l,k}$ the empty event.

Doubling index of any cube is non-strictly greater than the doubling index of any its subcube. Hence $E_{i,k} \subset \cup_{j=0}^i E_{j,k-1} $ and $E_{j,k-1} \subset \cup_{i=j}^k E_{i,k}$, where both unions are disjoint. Hence
$$\mathbb{P}\left( N(Q_{j_1,j_2, \dots,j_{k-1}, j_k})\leq \max(\frac{N(Q)}{(1+c)^{l}},N_0) \right)= \sum_{i=l}^k \mathbb{P}(E_{i,k}) $$
We start to prove by induction on $k$ that 
$$\sum_{i=l}^k \mathbb{P}(E_{i,k}) \geq \sum\limits_{i=l}^{k} C^{i}_{k}\left(\frac{1}{2A}\right)^{k-i}\left(1-\frac{1}{2A}\right)^i. $$
Indeed,
$$ \sum_{i=l}^k \mathbb{P}(E_{i,k})   =\sum_{i=l}^k \sum_{j=0}^i \mathbb{P}(E_{j,k-1}\cap E_{i,k}) = \sum_{j=0}^k \sum_{i=\max(l,j)}^k \mathbb{P}(E_{j,k-1}\cap E_{i,k})  $$

 $$\geq \sum_{j=l}^{k-1} \sum_{i = j}^k \mathbb{P}(E_{j,k-1}\cap E_{i,k}) + \sum_{i=l}^{k} \mathbb{P}(E_{l-1,k-1}\cap E_{i,k})  $$
 $$= \sum_{j=l}^{k-1} \mathbb{P}(E_{j,k-1}) + \sum_{i=l}^{k} \mathbb{P}(E_{l-1,k-1}\cap E_{i,k})= I + II. $$

It follows from (ii) that $$\mathbb{P}(E_{l-1,k-1}\cap E_{l-1,k}) \leq \frac{1}{2A} \mathbb{P} (E_{l-1,k-1}).$$
Since $\sum_{i=l-1}^{k} \mathbb{P}(E_{l-1,k-1}\cap E_{i,k}) = \mathbb{P} (E_{l-1,k-1})$, we obtain 
$$II = \mathbb{P} (E_{l-1,k-1}) -  \mathbb{P}(E_{l-1,k-1}\cap E_{l-1,k})\geq (1-\frac{1}{2A}) \mathbb{P} (E_{l-1,k-1}).$$
 Hence

 $$ I+II \geq \sum_{j=l}^{k-1} \mathbb{P}(E_{j,k-1}) + (1-\frac{1}{2A}) \mathbb{P} (E_{l-1,k-1})$$

 $$ =\frac{1}{2A}\sum_{j=l}^{k-1} \mathbb{P}(E_{j,k-1}) + (1-\frac{1}{2A}) \sum_{j=l-1}^{k-1} \mathbb{P}(E_{j,k-1}).$$

By the induction hypothesis for $k-1$ we can estimate the latter amount from below by

$$  \sum\limits_{i=l}^{k-1} C^{i}_{k-1}\left(\frac{1}{2A}\right)^{k-i}\left(1-\frac{1}{2A}\right)^i +  \sum\limits_{i=l-1}^{k-1} C^{i}_{k-1}\left(\frac{1}{2A}\right)^{k-1-i}\left(1-\frac{1}{2A}\right)^{i+1}$$

$$= \sum\limits_{i=l}^{k} (C^{i}_{k-1}+C^{i-1}_{k-1})\left(\frac{1}{2A}\right)^{k-i}\left(1-\frac{1}{2A}\right)^i=
 \sum\limits_{i=l}^{k} C^{i}_{k}\left(\frac{1}{2A}\right)^{k-i}\left(1-\frac{1}{2A}\right)^i.$$
Inequality \eqref{eq:ind} is proved, which implies

 $$\mathbb{P}\left( N(Q_{j_1,j_2, \dots,j_{k-1}, j_k}) > \max(\frac{N(Q)}{(1+c)^{l}},N_0) \right) \leq \sum\limits_{i=0}^{l-1} C^{i}_k\left(\frac{1}{2A}\right)^{k-i}\left(1-\frac{1}{2A}\right)^i.$$

 It remains to prove \eqref{eq:eps}. 
 By \eqref{eq:claim} applied for $p=\frac{1}{2A}$ we have 
$$\sum\limits_{i=0}^{l-1} C^{i}_k\left(\frac{1}{2A}\right)^{k-i}\left(1-\frac{1}{2A}\right)^i \leq \left (\frac{1}{2A} \right)^{k(1-\varepsilon)}$$

for $0\leq l\leq \sigma k / \log k $, $k \geq k_0$.

\end{proof}
 
 Now, we ready to formulate the corollary of Theorem \ref{th1/2} and Lemma \ref{l:iterations},
which will be used in the next section.

  \begin{theorem} \label{th:blogb}
 There exist constants $c_1, c_2, C>0$ and a positive integer $B_0$, depending on the dimension $n$ only, and positive numbers $ N_0=N_0(M,g,n,O)$, $R=R(M,g,n,O)$  such that for any cube $Q\subset B(O,R)$ the following holds: if we partition $Q$ into $B^n$ equal subcubes, where $B>B_0$, then the number of subcubes with doubling index greater than $\max(N(Q)2^{-c_1 \log B/ \log \log B},N_0)$ is less than $ C B^{n-1 - c_2}$.
  \end{theorem}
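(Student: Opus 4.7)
The plan is to view Theorem \ref{th1/2} as providing exactly the hypothesis of Lemma \ref{l:iterations} and then iterate at depth $k$ chosen so that the $A^{nk}$-partition matches the prescribed $B^n$-partition up to a bounded multiplicative factor. Take $k$ to be the largest integer with $A^k \leq B$, so $A^k \leq B < A^{k+1}$ and $k = \log B/\log A + O(1)$. Theorem \ref{th1/2} says that when a cube $\tilde Q$ is split into $A^n$ equal subcubes, at most $\tfrac{1}{2}A^{n-1}$ of them have doubling index exceeding $\max(N(\tilde Q)/(1+c), N_0)$. Writing this as a probability over a uniformly chosen subcube yields $\mathbb{P}\le A^{n-1}/(2A^n)=1/(2A)$, which is exactly condition (ii) of Lemma \ref{l:iterations} (monotonicity (i) was noted when the doubling index of a cube was defined).

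Next I would apply inequality \eqref{eq:eps} of Lemma \ref{l:iterations} with a small $\varepsilon>0$ to be fixed momentarily. This gives
\[
\#\bigl\{\text{depth-}k\text{ subcubes of }Q:\ N(\cdot)>\max\bigl(N(Q)(1+c)^{-\sigma k/\log k},\,N_0\bigr)\bigr\}
\ \leq\ A^{nk}\cdot \bigl(2A\bigr)^{-k(1-\varepsilon)}.
\]
Algebraically this equals $A^{k(n-1)}\cdot A^{k\varepsilon}\cdot 2^{-k(1-\varepsilon)}$. Since $A$ is a dimensional constant, for any $\varepsilon$ with $\varepsilon\log A+\varepsilon\log 2<\log 2$ (in particular for $\varepsilon$ small depending only on $n$) one has $A^{\varepsilon}\cdot 2^{-(1-\varepsilon)} \leq 2^{-c_2'}$ for some $c_2'>0$, whence the count above is bounded by $A^{k(n-1)}\cdot 2^{-c_2' k}\lesssim B^{n-1-c_2}$ for a suitable $c_2>0$ (using $B\asymp A^k$).

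Now I would translate this into a bound on the $B^n$-partition. Each depth-$k$ subcube has side $\mathrm{diam}(Q)/A^k$, while each $B$-subcube has side $\mathrm{diam}(Q)/B$; since $A^k\leq B<A^{k+1}$, each depth-$k$ subcube contains at most $(B/A^k)^n\leq A^n$ of the $B$-subcubes. By monotonicity of $N$ (a $B$-subcube's doubling index is at most that of the containing depth-$k$ subcube), every "bad" $B$-subcube sits inside a "bad" depth-$k$ subcube, and the count inflates by at most the dimensional factor $A^n$, preserving the bound $CB^{n-1-c_2}$.

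Finally I would calibrate the threshold. I need $N(Q)\,2^{-c_1\log B/\log\log B}\le N(Q)(1+c)^{-\sigma k/\log k}$, i.e. $c_1\log B/\log\log B \geq \sigma k\log_2(1+c)/\log k$. Since $k=\log B/\log A+O(1)$ and $\log k=\log\log B+O(1)$ for $B>B_0$ sufficiently large, the right side is bounded by a dimensional multiple of $\log B/\log\log B$, so any $c_1$ larger than a constant depending only on $n$ suffices. The main obstacle is essentially bookkeeping: choosing $\varepsilon$ small enough to extract a positive exponent $c_2$ from the probabilistic bound while keeping the threshold degradation (governed by $\sigma k/\log k$) matched to $\log B/\log\log B$; once these exponents are balanced, the passage from an $A^{nk}$-partition to a $B^n$-partition is just a bounded multiplicative loss.
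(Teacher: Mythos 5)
Your high-level strategy matches the paper's: use Theorem \ref{th1/2} to verify hypothesis (ii) of Lemma \ref{l:iterations}, iterate to depth $k\approx\log_A B$, apply \eqref{eq:eps} with a small $\varepsilon$ to get an exponent $n-1-c_2$, and then pass to the $B^n$-partition. The algebra producing $A^{nk}(2A)^{-k(1-\varepsilon)}\lesssim B^{n-1-c_2}$ is also fine. However, there are two genuine errors in the details.

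First, the passage from the $A^{nk}$-partition to the $B^n$-partition. You write that each depth-$k$ subcube ``contains'' at most $(B/A^k)^n$ of the $B$-subcubes and that ``every bad $B$-subcube sits inside a bad depth-$k$ subcube'' by monotonicity. But when $B$ is not a multiple of $A^k$ the two partitions are not nested: a $B$-subcube generically straddles the boundary between several depth-$k$ subcubes and is contained in none of them, so monotonicity (property (i)) does not apply. What you need instead is the \emph{covering} property stated right after the definition of $N(Q)$: if a cube $q$ is covered by cubes of larger diameter, then at least one of those cubes has doubling index $\geq N(q)$. Since $A^k\leq B$, each $B$-subcube is covered by a dimensional number of depth-$k$ subcubes of larger diameter; if the $B$-subcube is bad, one of those covering cubes is bad, and each depth-$k$ subcube covers at most a dimensional number of $B$-subcubes. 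This is what the paper does (modulo a typo in the paper that says ``smaller diameter'' where it must mean that the $\tilde Q_j$ have \emph{larger} diameter than the $Q_i$).

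Second, the calibration of $c_1$ is in the wrong direction. Lemma \ref{l:iterations} controls the number of subcubes with $N>\max\bigl(N(Q)(1+c)^{-\sigma k/\log k},\,N_0\bigr)$. To deduce a bound on the number with $N>\max\bigl(N(Q)2^{-c_1\log B/\log\log B},\,N_0\bigr)$, you need the latter set to be \emph{contained} in the former, i.e.\ the theorem's threshold must be at least as large as the lemma's:
\[
N(Q)\,2^{-c_1\log B/\log\log B}\;\geq\;N(Q)\,(1+c)^{-\sigma k/\log k},
\]
equivalently $c_1\log B/\log\log B\leq\sigma k\,\log_2(1+c)/\log k$. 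Since $k\asymp\log B/\log A$ and $\log k\asymp\log\log B$, this is achieved by taking $c_1$ \emph{small} enough (depending only on $n$), exactly as in the paper. You wrote the reverse inequality and concluded that $c_1$ must be \emph{large}, which makes the threshold smaller, enlarges the bad set, and breaks the inclusion you need. That conclusion also would not even match the theorem's statement, which asserts the existence of some $c_1>0$, not that all sufficiently large $c_1$ work. Both fixes are straightforward and keep you on the same path as the paper, but as written the argument does not go through.
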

 \begin{proof}
  Let us fix $c,A,N_0,R$ from Theorem  \ref{th1/2}.
  
 Fix a cube $Q\subset B(O,R)$ and partition it into $A^n$ equal subcubes $Q_{i_1}$, then each $Q_{i_1}$
partition into $A^n$ subcubes $Q_{i_1,i_2}$ and so on. The collection of all subcubes $Q_{i_1,i_2,\dots, i_k}$
of all sizes we denote by $\mathbb{A}$.

First, we will consider the  case $B=A^k$, where $k$ is sufficiently large. 
In this case  Theorem  \ref{th:blogb} follows from Lemma  \ref{l:iterations}.
 Let's first check  the assumptions (i) and (ii) of Lemma \ref{l:iterations}.
 The monotonicity property (i) for the  doubling index of cubes is clear from the definition.
 The second assumption (ii) follows from Theorem \ref{th1/2}.
 Now fix $\varepsilon>0$ so small that
 $$\left(\frac{1}{2A}\right)^{1-\varepsilon}=\left (\frac{1}{A} \right )^{1+c_2}$$
for some $c_2>0$.
 
The conclusion \eqref{eq:eps} of Lemma \ref{l:iterations} for this $\varepsilon$
claims
that the number of subcubes $Q_{i_1,i_2,\dots, i_k}$ with $N(Q_{i_1,i_2, \dots, i_k})\geq \max(\frac{N(Q)}{(1+c)^{\sigma k/\log k}},N_0) $ is smaller than  $$ A^{nk}  \left(\frac{1}{2A}\right)^{k(1-\varepsilon)}= A^{k(n-1-c_2)}=B^{n-1-c_2}.$$
 Note that $\log B = k \log A$, we therefore can choose $c_1>0$ so small that $$\log(1+c)\cdot \sigma k/\log k \geq c_1 \log 2 \cdot \log B/ \log\log B$$ for all sufficiently large $B=A^k$. This is done to provide
$$ (1+c)^{\sigma k/\log k} \geq 2^{c_1 \log B/ \log\log B}.$$

 We have proved Theorem \ref{th:blogb} in the case $B=A^k$. 

Now, let $B \in [A^{k},A^{k+1}]$ and define $\tilde B = A^{k}$. There are two partitions of $Q$ into equal subcubes, say $Q=\cup Q_i$, $i=1\dots B^n$, and $Q=\cup \tilde Q_i$,
, $i=1\dots \tilde B^n$. We know that the number   of cubes $\tilde Q_i$ with doubling index greater than  
$\max(N(Q)2^{-c_1 \log \tilde B/ \log \log \tilde B},N_0)$ is less than  $\tilde B^{n-1 - c_2}$.
Each cube $Q_i$ is covered by a finite number, which depends on dimension $n$ and on $A=A(n)$ only,   of cubes $\tilde Q_j$, which have a smaller diameter.
If  $N(Q_i)$ is greater than $\max(N(Q)2^{-c_1 \log \tilde B/ \log \log \tilde B},N_0)$, then one of $\tilde Q_j$ that cover $Q_i$ also has $N(\tilde Q_j)$ greater than $\max(N(Q)2^{-c_1 \log \tilde B/ \log \log \tilde B},N_0)$.
Thus the number of cubes $ Q_i$ with doubling index greater than  $\max(N(Q)2^{-c_1 \log \tilde B/ \log \log \tilde B},N_0)$ is less than $C\tilde B^{n-1 - c_2}$.  We can decrease $c_1$ and increase $C$ to replace $\tilde B$ by $B$ in the previous sentence.

  \end{proof}

\begin{remark}
Here we collected several informal remarks to orient the reader.
 The goal of this paper is to estimate 
 the Hausdorff measure  of dimension $n-1$ of zero sets of harmonic functions from below. 
 If a harmonic function is zero at the center of a cube and the doubling index of this cube is bounded by a fixed constant, then it is not difficult and well known 
that there is a lower bound for the volume of the zero set  in this cube. Unfortunately, the bound depends on the doubling index and  it is not clear  why the lower estimate 
 does not become worse as the doubling index becomes large.

 In the next section there will be an argument that works for the case of the large doubling index of the original cube. Speaking non-formally the argument   will show that for a proper choice of $B$ the number of  subcubes, which contain zeroes, is larger than $B^{n-1}$ , and the argument  severely exploits  that the number of  bad subcubes with large doubling index is smaller than $B^{n-1}$. 
 We don't specify here what the words ``smaller'' and ``larger" mean.

 If we partition the cube with zero at the center into $B^n$ equal subcubes, there can be some subcubes
 with small doubling index, which intersect the zero set, but there also can be bad
 subcubes with large doubling index, where we have no good a priori estimate. 
 The estimate for the number of bad subcubes appears to be useful.

 In Theorem \ref{th1/2} the number of subcubes $A^n$ is fixed and it shows that all except at most $\frac{1}{2}A^{n-1}$ of the subcubes have constant times smaller doubling index than a big cube. For the estimates of the volume of the nodal set it is crucial that the number of exceptions is smaller  than  $A^{n-1}$.  In Theorem \ref{th:blogb}  the number of subcubes $B^n$  tends to infinity, but the bigger $B$ the smaller the doubling index for the most of the subcubes becomes and we still want the number of bad subcubes with big doubling index to be smaller than $B^{n-1}$.  Theorem \ref{th:blogb} is the iterated version
of Theorem \ref{th1/2},  the iteration procedure is similar to the independent flips of the coin. The  quantity  $ k/ \log k \sim \log B / \log \log B$ in  Theorem \ref{th:blogb}   comes from the simple estimate of the tails of the binomial distribution \eqref{eq:claim}.

 We also note that for the purposes of the paper  a weaker estimate than the conclusion of Theorem \ref{th:blogb} would be sufficient.
 Namely, it is sufficient to know that the number of subcubes with doubling index greater than $\max(N(Q)/ (\log B)^\kappa ,N_0)$ is less than $  B^{n-1}/ (\log B)^\kappa$, where $\kappa>0$ is a sufficiently large constant depending only on the dimension.  

\end{remark}

\section{A tunnel with controlled growth}
 This section contains a geometrical construction  that allows to find many disjoint balls 
with sign changes of the harmonic function (Proposition \ref{cor:signch}). It appears to be useful for lower estimates for the nodal sets.   The construction is using the estimates for the number of cubes with big doubling index and requires  to look at several statements of the previous sections. The whole 
section  consists of the proof of one proposition.
\begin{prop} \label{cor:signch}
Fix a point $O $  on the Riemannian manifold $M$ equipped with Riemannian metric $g$.  There is  a sufficiently small radius $R_0>0$ such that for any ball $B_g(p,2r) \subset B_g(O,R_0)$ and for any harmonic function $u$ on $B_g(p,2r)$ the following holds. If $\beta(p,r)$ is sufficiently large, then there is
a number $N$ with 
$$ \beta(p,r)/10 \leq N \leq 2\beta(p,\frac{3}{2}r) $$ and  at least $[\sqrt N]^{n-1} 2^{c_3 \log N / \log\log N}$ disjoint balls ${B_g(x_i, \frac{r}{\sqrt N})} \subset B(p,2r)$  such that $u(x_i)=0$.
\end{prop}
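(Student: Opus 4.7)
The plan is to combine three ingredients from the preceding sections and then force the zeros by a tunneling Harnack argument. The three ingredients are: Lemma~\ref{lem:nen} to select a scale $s$ where the frequency is uniformly comparable to some number $N$; the quantitative maximum analysis of Section~\ref{sec:maximum}, which pins $|u|$ near $K=\sup_{\overline{B_g(p,s)}}|u|$ in a small Euclidean cube around a maximizing point $x\in\partial B_g(p,s)$ and simultaneously forces $|u|$ to be exponentially smaller on the back side $B_g(p,s_{-\delta})$; and the combinatorial Theorem~\ref{th:blogb}, which gives a partition of that cube into subcubes with sharp control on the number of ``bad'' subcubes of large doubling index.

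First, largeness of $\beta(p,r)$ forces $\beta(p,r/2)>10$, so Lemma~\ref{lem:nen} supplies $s\in[r,\tfrac{3}{2}r)$ and $N\geq 5$ with $N\leq \beta(p,t)\leq 2eN$ on a short window around $s$. Almost monotonicity of $\beta$ gives $\beta(p,r)\leq (1+\varepsilon)\beta(p,t)\leq 2e(1+\varepsilon)N$ and $\beta(p,3r/2)\geq \beta(p,s)/(1+\varepsilon)\geq N/(1+\varepsilon)$, hence the bracket $\beta(p,r)/10\leq N\leq 2\beta(p,3r/2)$ of the statement. Fix $x\in\partial B_g(p,s)$ attaining $K=|u(x)|$, assume $u(x)>0$, and set $\delta=1/(A\log^2 N)$ (the top of the admissible range in \eqref{eq:s1s2}). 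Then estimates \eqref{eq:s-}, \eqref{eq:s+}, \eqref{eq:max1}, \eqref{eq:max3} yield a Euclidean cube $Q$ of side of order $\delta s$ essentially centered at $x$ with $\sup_Q|u|\leq K\,2^{C\delta N}$ and doubling index $N(Q)\leq C\delta N+C$, while $\sup_{B_g(p,s_{-\delta})}|u|\leq CK\,2^{-c\delta N}$.

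Partition $Q$ into $B^n$ equal Euclidean subcubes whose side is of order $r/\sqrt N$, i.e.\ take $B$ of order $\delta\sqrt N$ up to polylog corrections. Theorem~\ref{th:blogb} bounds by $CB^{n-1-c_2}$ the number of subcubes whose doubling index exceeds $\max(N(Q)\,2^{-c_1\log B/\log\log B},N_0)$. Organize the subcubes into $B^{n-1}$ ``radial columns'' running from $Q\cap\partial B_g(p,s)$ inward toward $B_g(p,s_{-\delta})$; since at most $CB^{n-1-c_2}$ subcubes are bad, all but a fraction $O(B^{-c_2})$ of the columns consist entirely of good subcubes.

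The main obstacle is to extract a zero of $u$ in each such column and to upgrade the resulting count from $B^{n-1}$ to $\lfloor\sqrt N\rfloor^{n-1}\,2^{c_3\log N/\log\log N}$. If $u$ kept one sign along a good column, a Harnack chain through the $\sim\delta\sqrt N$ overlapping balls of radius $\sim r/\sqrt N$ making up the column would force $|u|\geq cK\,2^{-C'\delta\sqrt N}$ at the inner end, contradicting $\sup_{B_g(p,s_{-\delta})}|u|\leq CK\,2^{-c\delta N}$ because $c\delta N$ dominates $C'\delta\sqrt N$ for all sufficiently large $N$. Hence each good column contains a zero $x_i$ of $u$, and centering a ball of radius $r/\sqrt N$ at $x_i$ inside its column produces balls that are pairwise disjoint. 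The final polylog-to-super-polylog correction $2^{c_3\log N/\log\log N}/\log^{2(n-1)}N$ is recovered from the fact that the gap factor $2^{c_1\log B/\log\log B}$ in Theorem~\ref{th:blogb} grows faster than any fixed power of $\log N$: this slack permits taking $B$ slightly larger than the naive $\sqrt N/\log^2 N$ while preserving both the subcube side of order $r/\sqrt N$ and the column-Harnack contradiction. Making this quantitative optimization rigorous---and verifying the Harnack chain in the Riemannian setting with the negligible distortion permitted by \eqref{eq:epsilon}---is the technical heart of the proposition.
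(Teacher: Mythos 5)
Your proposal correctly identifies and assembles the preliminary ingredients (Lemma~\ref{lem:nen}, the maximum analysis of Section~\ref{sec:maximum}, and Theorem~\ref{th:blogb}), and your bracketing of $N$ is fine. But the core counting step is wrong: a Harnack chain along each ``good column'' can only force one sign change per column, and a one-zero-per-column argument cannot produce the super-polylogarithmic factor $2^{c_3\log N/\log\log N}$ that the proposition asserts. Concretely, if the columns tile a cross-section of size $\lesssim \delta s\sim r/\log^2 N$ and you only have one zero in each column, then no matter how thin you make the columns, the zeros in a worst-case configuration all lie in a single slab transverse to the tunnel, and the number of pairwise-disjoint balls of radius $r/\sqrt N$ you can centre on them is bounded by $\bigl(\frac{\delta s}{r/\sqrt N}\bigr)^{n-1}\lesssim (\sqrt N/\log^2 N)^{n-1}$, which is already a polylog short of $[\sqrt N]^{n-1}$ and contains no super-polylog gain at all. ``Taking $B$ slightly larger'' does not escape this: making the columns thinner increases the number of zeros and simultaneously increases the number of zeros falling inside any fixed ball of radius $r/\sqrt N$ by exactly the same factor, so the disjoint count is unchanged. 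You also never actually use Theorem~\ref{th:blogb}: your Harnack chain needs nothing about doubling indices, which is a signal that the combinatorial input is not being exploited.

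The paper's proof gets the factor $2^{c\log N/\log\log N}$ by counting \emph{many} sign changes in each good tunnel, not one. In a good tunnel every cube has doubling index $\le N\,2^{-c\log N/\log\log N}$, so when passing from a cube $q_{i,t}$ to $q_{i,t+1}$ the quantity $\log\bigl(\sup_{\frac12 q_{i,t+1}}|u|/\sup_{\frac12 q_{i,t}}|u|\bigr)$ is at most $N\,2^{-c\log N/\log\log N}$ if there is a potential sign change, and at most an absolute constant $C_1$ (by Harnack) if $u$ keeps one sign on $\overline{q_{i,t}}\cup\overline{q_{i,t+1}}$. Since the total growth across the tunnel is forced to be at least $cN/\log^2 N - C$ by \eqref{tunnel4}, and the Harnack steps only contribute $\lesssim [\sqrt N]\log^4 N \ll N/\log^2 N$, the number of sign-change steps must be at least $\sim 2^{c\log N/\log\log N}/\log^2 N$. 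This ``Harnack versus doubling-index'' dichotomy on consecutive cubes is precisely where Theorem~\ref{th:blogb} enters, and it is the mechanism that produces the super-polylog factor. Your proposal replaces this dichotomy with a single Harnack chain, which loses the count.
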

  \begin{proof}

 According to Section \ref{sec:maximum} we can find a spherical layer where the frequency does not grow too fast:
there exist numbers $s \in[r,\frac{3}{2}r]$  and $N \geq 5$ such that
$$ N \leq \beta(p,t) \leq 2eN$$
for any $t \in(s(1 - \frac{1}{1000 \log^2 N}), s(1 + \frac{1}{1000 \log^2 N}))$.

By the monotonicity property of the frequency $$\beta(p,r) \leq (1+\varepsilon) \beta(p,t) \leq 10 N$$
and $$N \leq \beta(p,s) \leq (1+\varepsilon) \beta(p,\frac{3}{2}r) \leq 2 \beta(p,\frac{3}{2}r). $$

 Till the end of this section we will assume that $N$ is sufficiently large. 

 Fix a point $x \in \partial B_g(p,s)$  such that $ \sup\limits_{\partial B_g(p,s)} |u| = |u(x)|$.

Put 
\begin{equation} \label{eq:delta}
\delta =  \frac{1}{10^8 n^2\log^{2} N}.
\end{equation}

 Consider a point $\tilde x \in \partial B_g(p,s(1-\delta))$ such that $d_g(x,\tilde x) = \delta s$. In other words, $\tilde x$ is the nearest point to $x$ on $\partial B_g(p,s(1-\delta))$.  Note that
\begin{equation} \label{eq:size}
C_1(n) \frac{r}{\log^2 N} \leq d(x,\tilde x) \leq C_2(n) \frac{r}{\log^2 N}. 
\end{equation}

Let us consider a box $T$ (a hyperrectangle in the Euclidean space)  such that $x$ and $\tilde x$ are the centers  of the opposite faces of $T$,   one side of $T$ is equal to $d(x,\tilde x)$ and $n-1$ other sides are equal to $\frac{d(x,\tilde x)}{[\log N]^{4}}$, where $[ \cdot]$ denotes the integer part of a number. 

Let us divide $T$ into equal boxes $T_i$, $i =1,2,\dots ,[\sqrt N]^{n-1}$, so that each $T_i$ has one side of length $d(x,\tilde x)$ and (n-1) sides of length $\frac{d(x,\tilde x)}{[\sqrt N] [\log N]^{4}}$.
 We partition each $T_i$ into equal cubes $q_{i,t}$, $t=1,2, \dots, [\sqrt N] [\log N]^{4}$, with side $\frac{d(x,\tilde x)}{[\sqrt N] [\log N]^{4}}$, and the cubes $q_{i,t}$ are arranged in $t$ so that  $d(q_{i,t},x) \geq d(q_{i,t+1},x)$. We will call the boxes $T_i$ "tunnels".
 
 Note that $$d_g(p, q_{i,1}) \leq d_g(p, \tilde x) + d_g( \tilde x, q_{i,1}) \leq s(1-\delta) +  C \frac{ \delta s \sqrt n }{ [\log N]^4} \leq s(1-\delta/2 ).$$
 
 Hence $q_{i,1} \subset B_g(p, s(1-\delta/4)) $.
Recall that $|u(x)|=K$. Then by \eqref{eq:s-} 
\begin{equation} \label{tunnel1}
 \sup\limits_{q_{i,1}} |u|\leq K 2^{-c_1 \frac{N}{\log^2 N} +C_1}.
\end{equation}

Applying \eqref{eq:max1} with  $\delta$, which is $100n^2$ times larger   than $\delta$ defined by \eqref{eq:delta}, we obtain that
 for any point $y \in T$
\begin{equation} \label{tunnel2}
 \N(y,  10n  \delta s) \leq C \delta N  + C \leq N/100.
\end{equation}

The center of $q_{i,t}$  will be denoted by $x_{i,t}$.

 Now, let $t=[\sqrt N] [\log N]^{4}$. We can inscribe a geodesic ball $B_{i,t}$ in $\frac{1}{2}q_{i,t}$ with center at $x_{i,t}$ and  radius $ \frac{s}{ N}$. Taking into account $$d_g(x_{i,t}, x) \leq {\frac{C_2s}{[\log N]^6}},$$ we deduce from \eqref{eq:max3}, applied with $\tilde x = x_{i,t}$, that  $$\sup\limits_{B_{i,t}}|u| \geq K 2^{-C_3 \frac{N}{\log^{5} N} -C_3},$$
and therefore
\begin{equation} \label{tunnel3}
\sup\limits_{\frac{1}{2}q_{i,[\sqrt N] [\log N]^{4}}}|u| \geq K 2^{-C _3\frac{N}{\log^{5} N} -C_3}.
\end{equation}
  The inequalities \eqref{tunnel1}, \eqref{tunnel3} imply the following estimate:
 there exist positive $c$, $C$ such that
\begin{equation} \label{tunnel4}
\sup\limits_{\frac{1}{2}q_{i,[\sqrt N] [\log N]^{4}}}|u| \geq \sup\limits_{\frac{1}{2}q_{i,1}} |u| 2^{c N/ \log^2N - C}.
\end{equation}

 The next step in the proof of Proposition \ref{cor:signch} is the following claim.
\begin{claim} \label{le:goodt} There exist $c>0$, $N_0>0$ such that 
at least  half of  tunnels $T_i$ have the following property
\begin{equation} \label{eq:goodt}
N(q_{i,t}) \leq \max( \frac{N}{2^{c \log  N/ \log \log  N}}, N_0)
\end{equation}
for all $t=1,2, \dots [\sqrt N] [\log N]^{4}$.
\end{claim}
\begin{proof}[Proof of the claim]
We will assume that $N$ is sufficiently big.
Let us call a cube $q_{i,t}$ bad if $N(q_{i,t}) >  N2^{-c_1 \log  N/ \log \log  N}$, where a constant $c_1$ is from Theorem \ref{th:blogb}. It is sufficient to show that the number of bad cubes is less than the half of the number of tunnels $T_i$, i.e. $\frac{1}{2}[\sqrt N]^{n-1}$.

Let us partition $T$ into equal Euclidean cubes $Q_{t}, t=1,2, \dots, [\log N]^{4}$ with side $\frac{d(x,\tilde x)}{[\log N]^{4}}$. 
For any point $y \in T$ the distance $$d_g(x,y) \leq 2d(x,y) \leq 4 d(x,\tilde x) \leq \frac{s}{10^7  \log^2N}=: \rho.$$
 By \eqref{eq:max1} we have $$\frac{\sup\limits_{B_g(y,\rho)}|u|}{\sup\limits_{B_g(y,\rho/2)}|u|} \leq 2^{C N / \log^2 N + C}.$$ The last observation implies that  $$N(Q_{t}) \leq N$$
 for  $t=1,2, \dots, [\log N]^{4}$.

 It follows from Theorem \ref{th:blogb} with $B=[\sqrt N]$ that the number of bad cubes in $Q_{t}$ is less than $C[\sqrt N]^{n-1-c_2}$. Thus the number of all bad cubes is less than 
$$C[\sqrt N]^{n-1-c_2} [\log N]^4 \leq \frac{1}{2}[\sqrt N]^{n-1}.$$

\end{proof}

 We will call a tunnel $T_i$ good if \eqref{eq:goodt} holds.

The next step in the proof of Proposition \ref{cor:signch} is the following claim.
\begin{claim}
There exists $c_2>0$ such that if  $N$ is sufficiently large and $T_i$ is a good tunnel, then 
there are at least $2^{c_2 \log N /\log\log N }$ closed cubes $\overline{q_{i, t}}$ that contain zero of $u$.
\end{claim}
\begin{proof}[Proof of the claim]
 By \eqref{eq:goodt} we know that  

  \begin{equation} \label{tunnel5}
\log \frac{\sup\limits_{\frac{1}{2}q_{i,t+1}}|u|}{\sup\limits_{\frac{1}{2}q_{i,t}}|u|} \leq 
\log \frac{\sup\limits_{4q_{i,t}}|u|}{\sup\limits_{\frac{1}{2}q_{i,t}}|u|}
 \leq \frac{N}{2^{c_1 \log N / \log\log N} }
\end{equation}
for any $t=1,2, \dots, [\sqrt N] [\log N]^{4}-1$.

Let us split the set $\{1,2, \dots, [\sqrt N] [\log N]^{4}-1\}$ into two subsets $S_1, S_2$. The set $S_1$ is the set of all $t$ such  that $u$ does not change the sign in $\overline{q_{i, t}} \cup \overline{q_{i, t+1}} $
and $S_2 = \{1,2, \dots, [\sqrt N] [\log N]^{4}-1\} \setminus S_1 $.
 By the Harnack inequality for $t \in S_1$ we have 
\begin{equation}
\log \frac{\sup\limits_{\frac{1}{2}q_{i,t+1}}|u|}{\sup\limits_{\frac{1}{2}q_{i,t}}|u|} \leq C_1. 
\end{equation}
 and for any $t \in S_2$ the inequality \eqref{tunnel5} holds.
 We therefore have  $$ \log \frac{\sup\limits_{\frac{1}{2}q_{i, [\sqrt N] [\log N]^{4}}}|u|}{\sup\limits_{\frac{1}{2}q_{i,1}}|u|} = \sum\limits_{S_1} \log \frac{\sup\limits_{\frac{1}{2}q_{i,t+1}}|u|}{\sup\limits_{\frac{1}{2}q_{i,t}}|u|} + \sum\limits_{S_2} \log \frac{\sup\limits_{\frac{1}{2}q_{i,t+1}}|u|}{\sup\limits_{\frac{1}{2}q_{i,t}}|u|} \leq  $$
 $$ |S_1| C_1 +|S_2| \frac{N}{2^{c_1 \log N / \log\log N}}.$$
 By \eqref{tunnel4}  
\begin{equation} \label{tunnel7}
  c N/\log^2 N - C \leq  \log \frac{\sup\limits_{\frac{1}{2}q_{i,[\sqrt N] [\log N]^{4}}}|u|}{\sup\limits_{\frac{1}{2}q_{i,1}}|u|}.
\end{equation}
 Hence 
 $$ c N/\log^2 N - C \leq  |S_1| C_1 +|S_2| \frac{N}{2^{c_1 \log N / \log\log N}}.$$
 Note that  $$ |S_1| C_1 \leq C_1 [\sqrt N] \log^4 N \leq \frac{c}{2} N/\log^2 N - C$$ 
 for $N$ large enough.
 Thus 
$$| S_2| \geq \frac{2^{c_1 \log N / \log\log N}}{2 \log^2 N} \geq 2^{\frac{c_1}{2} \log N / \log\log N}.$$
\end{proof}
 We continue the proof of Proposition \ref{cor:signch}.
  At least half of the tunnels $T_i$ are good by Claim \ref{le:goodt}. Hence the number of cubes $\overline{q_{i,t}}$ where $u$ changes a sign is at least 
 $\frac{1}{2}[\sqrt N]^{n-1} 2^{c_2 \log N / \log\log N}$. For any such cube let us fix a point  $x_{i,t} \in \overline{q_{i,t}}$
 such that $u(x_{i,t})=0$. We had find many disjoint cubes with sign changes. To replace the cubes by balls is not difficult.

 The fact that the side of $\overline{q_{i,t}}$ is comparable to $\frac{r}{\sqrt N \log^6 N}$ shows that
each ball $B_g(x_{i_0,t_0}, \frac{r}{\sqrt N })$ intersects not greater than $C_3 [\log N]^{6n}$ other balls $B_g(x_{i,t}, \frac{r}{\sqrt N })$. We can choose the maximal set of disjoint balls $B_g(x_{i,t}, \frac{r}{\sqrt N })$.
 Since the number of $x_{i,t}$ is at least $\frac{1}{2}[\sqrt N]^{n-1} 2^{c_2 \log N / \log\log N} $ and the number of intersections for each ball is bounded by  $C_3 [\log N]^{6n}$,  the maximal set of disjoint balls $B_g(x_{i,t}, \frac{r}{\sqrt N })$ will consist of at least $\frac{c_3[\sqrt N]^{n-1} 2^{c_2 \log N / \log\log N}}{ [\log N]^{6n}}$ balls. We can choose $c_4 \in (0,c_2)$
such that  $$\frac{c_3[\sqrt N]^{n-1} 2^{c_2 \log N / \log\log N}}{ [\log N]^{6n}} \geq [\sqrt N]^{n-1} 2^{c_4 \log N / \log\log N}$$
for large enough $N$.

\end{proof}

\begin{remark} The following remark will not be used later, but shows the flexibility of the construction.
 In the statement and in the proof of Proposition \ref{cor:signch} one can replace $\sqrt N$ by $N^{\alpha}$ with any $\alpha \in (0,1)$ and the statement will remain true.

If we fix a point $O $  on the Riemannian manifold $M$ equipped with Riemannian metric $g$.  There is  a sufficiently small radius $R_0>0$ such that for any ball $B_g(p,2r) \subset B_g(O,R_0)$ and for any harmonic function $u$ on $B_g(p,2r)$ the following holds. If $\beta(p,r)$ is sufficiently large, then there is
a number $N$ with 
$$ \beta(p,r)/10 \leq N \leq 2\beta(p,\frac{3}{2}r) $$  and  at least $N^{\alpha(n-1)} 2^{c \log N / \log\log N}$ disjoint balls ${B_g(x_i, \frac{r}{ N^\alpha})} \subset B(p,2r)$  such that $u(x_i)=0$.
\end{remark}

%%%%%%%%%%%%%%%%%%%%%%%%%%%%%%%%%%%%%%%%%%%%%%%%
\section{Estimate of the volume of the nodal set}
 In this section we prove Theorem \ref{Nad}, we formulate it in the scaled form.

Define the function
$$F(N):= \inf  \frac{H^{n-1}(\{u=0\}\cap B_g(x,\rho))}{ \rho^{n-1}}$$ 
 where the infimum is taken over all balls $B_g(x,\rho)$ within $B_g(O,R_0)$ and all harmonic functions $u$ on $M$ with respect to  metric $g$ such that $u(x)=0$ and $N(B_g(O,R_0))\leq N$. Here we denote by the $N(B_g(O,R_0))$ the supremum of
$\beta(x,r)$ over all $B_g(x,r) \subset B_g(O,R_0) $. Recall that the radius $R_0= R_0(M, g, n, O)$ is a sufficiently small positive number. 
 \begin{theorem} \label{Nad1} There exists $c>0$ such that 
$F(N) \geq c$ for all positive $N$.
 \end{theorem}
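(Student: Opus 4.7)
The plan is to establish the uniform bound $F(N) \geq c$ by a dichotomy on the frequency at the centre of a near-infimum configuration, using Proposition \ref{cor:signch} to rule out the large-frequency case. First I would record the classical Harnack/doubling baseline: there exists $c_0(C_0) > 0$ such that whenever $u$ is harmonic on a ball with $u(y) = 0$ at the centre and $\beta_u(y, \rho) \leq C_0$, one has $H^{n-1}(\{u=0\} \cap B_g(y, \rho)) \geq c_0(C_0)\rho^{n-1}$. Applying this with $C_0 = N$ to any admissible $u$ would already give $F(N) > 0$ for every finite $N$; this positivity is the lever that turns the dichotomy into an absolute bound.

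Given $\varepsilon > 0$, I would choose admissible $(u, x, \rho)$ with $H^{n-1}(\{u=0\} \cap B_g(x,\rho))/\rho^{n-1} < F(N) + \varepsilon$ and split on the size of $\beta_u(x,\rho/8)$ against a large constant $C_0$ to be fixed. If $\beta_u(x, \rho/8) \leq C_0$, the baseline in $B_g(x, \rho/8)$ gives $F(N) + \varepsilon \geq c_0(C_0)/8^{n-1}$. Otherwise $\beta_u(x, \rho/8) > C_0$, and I would apply Proposition \ref{cor:signch} with $p = x$, $r = \rho/8$ to produce $N^{*}$ with $N^{*} > C_0/10$ and at least $[\sqrt{N^{*}}]^{n-1}\,2^{c_3 \log N^{*}/\log\log N^{*}}$ disjoint balls $B_g(x_i, (\rho/8)/\sqrt{N^{*}}) \subset B_g(x, \rho/4)$ on which $u$ vanishes at the centre. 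Each such ball lies inside $B_g(O, R_0)$, so the definition of $F(N)$ contributes at least $F(N) \bigl((\rho/8)/\sqrt{N^{*}}\bigr)^{n-1}$ of nodal area; summing over the disjoint family,
$$ H^{n-1}(\{u=0\} \cap B_g(x, \rho))/\rho^{n-1} \;\geq\; c_4 \cdot 2^{c_3 \log N^{*}/\log\log N^{*}} F(N). $$

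The final step is a calibration: fix $C_0$ so large that $c_4 \cdot 2^{c_3 \log(C_0/10)/\log\log(C_0/10)} \geq 2$, which is possible since this quantity diverges with $C_0$ and $N^{*} \geq C_0/10$. The large-frequency branch then forces $F(N) + \varepsilon \geq 2F(N)$, i.e.\ $F(N) \leq \varepsilon$, while the small-frequency branch forces $F(N) \geq c_0(C_0)/8^{n-1} - \varepsilon$. Since $F(N) > 0$, taking $\varepsilon < F(N)$ rules out the large-frequency branch, and sending $\varepsilon \to 0$ yields $F(N) \geq c := c_0(C_0)/8^{n-1}$, a constant independent of $N$. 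The main obstacle will be to verify that the gain $2^{c_3 \log N^{*}/\log\log N^{*}}$ of Proposition \ref{cor:signch} survives the packing/summation passage from many small balls back to the big ball: this is exactly why that proposition was sharpened to beat the $[\sqrt{N^{*}}]^{n-1}$ area deficit by a factor that grows in $N^{*}$, and also why the self-similar use of $F(N)$ on the small balls (with the same $N$, because the global frequency bound is unchanged) is legitimate.
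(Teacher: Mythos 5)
Your argument is essentially the same as the paper's proof. Both extract a near-infimum admissible triple $(u,x,\rho)$, then split on whether the frequency at $x$ at a comparable scale is below or above a large absolute threshold $C_0$: in the low-frequency case a standard Harnack/doubling argument gives $F(N)\geq c_0(C_0)\rho^{n-1}/\rho^{n-1}$ directly, while in the high-frequency case Proposition \ref{cor:signch} produces $[\sqrt{N^*}]^{n-1}2^{c_3\log N^*/\log\log N^*}$ disjoint small balls with zeros inside $B_g(O,R_0)$, so the definition of $F(N)$ applies self-similarly on each and the packing estimate yields a factor $\geq 2$ improvement over $F(N)$, a contradiction once the threshold $C_0$ is calibrated. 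The only cosmetic differences are that the paper normalizes the near-infimum threshold as $2F(N)$ rather than $F(N)+\varepsilon$ and uses the radius $r=\rho/2$ in place of your $\rho/8$; your $\varepsilon$-argument is legitimate precisely because the preliminary observation $F(N)>0$ lets you take $\varepsilon<F(N)$. Your closing remark also correctly identifies the two points the paper leans on: the superpolynomial gain $2^{c_3\log N^*/\log\log N^*}$ in Proposition \ref{cor:signch} beats the $[\sqrt{N^*}]^{\,1-n}$ area deficit, and the self-similar appeal to $F(N)$ is valid because the global frequency bound defining $F(N)$ concerns all of $B_g(O,R_0)$ and is inherited by the small balls.
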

 
\begin{proof}
 
Let $u$ be a harmonic function that vanishes at $x$, $B_g(x,\rho) \subset B(O,R_0)$ and $\beta_u(x,\rho) \leq N$ for $B_g(x,\rho) \subset B(O,R_0)$.
 By the allmost monotonicity property of the doubling index we know that 
$$N \geq \frac{1}{2} \lim\limits_{\rho \to 0} \beta(x,\rho) \geq 1/2.$$ 
 Hence $N$ is separated from zero.
 Furthermore let us assume that $F(N)$ is almost attained on $u$:
\begin{equation} \label{eq:contradiction}
\frac{H^{n-1}(\{u=0\}\cap B_g(x,\rho))}{ \rho^{n-1}} \leq 2F(N).
\end{equation}
We start with a  naive and well-known estimate that gives some lower bound for $F(N)$.
There exists $c_1>0$ such that  
\begin{equation} \label{eq:trivial}
\frac{H^{n-1}(\{u=0\}\cap B_g(x,\rho))}{ \rho^{n-1}} \geq \frac{c_1}{(\beta(x,\rho/2))^{n-1}} \geq  \frac{c_2}{N^{n-1}}.
\end{equation}
This estimate follows from the fact that if a harmonic function $u$ vanishes at $x$ and has the frequency (or the doubling index) of  $B_g(x,\rho/2)$  smaller than $N$, then
 one can inscribe in $B_g(x,\rho/2)$ a ball of radius $\sim \frac{\rho}{N}$ where $u$ is  positive
 and a ball  of radius $\sim \frac{\rho}{N}$ where $u$ is  negative.  For instance, see \cite{LM1} for the details.

 We can use the estimate \eqref{eq:trivial} to bound $F(N)$ from below for small  $\beta(x,\rho/2)$.
 Now, we will assume that $N$ is sufficiently big and will show that $\beta(x,\rho/2)$ is bounded.

  We argue by assuming the contrary.
Let $\beta(x,\rho/2)$ be sufficiently big, then
we can apply Proposition \ref{cor:signch} for the ball $B_g(x, 2r)= B_g(x, \rho)$  and find a number $$ \tilde N \geq \beta(x,\rho/2)/10$$ and $[\sqrt{\tilde N}]^{n-1} 2^{c_3 \log \tilde N / \log\log \tilde N}$ disjoint balls $B_g(x_i, \frac{r}{\sqrt{\tilde N} })$ within $ B(x,2r)$  such that $u(x_i)=0$.
For each $i$ we know that 
$$H^{n-1}(\{u=0\}\cap B_g(x_i, \frac{r}{\sqrt{\tilde N} })) \geq F(N) \left(\frac{r}{\sqrt{\tilde N} } \right)^{n-1}.$$
 Since the number of such balls is at least $[\sqrt{\tilde N}]^{n-1} 2^{c_3 \log \tilde N / \log\log \tilde N}$ and these balls are disjoint and contained in $B_g(x,\rho)$, we get 

$$H^{n-1}(\{u=0 \} \cap B_g(x,\rho)) \geq F(N) \left[\sqrt{\tilde N}\quad \!\!\! \right]^{n-1} 2^{c_3 \log \tilde N / \log\log \tilde N} \left(\frac{\rho/2}{\sqrt{\tilde N} } \right)^{n-1}  $$
 We can decrease $c_3$ to a smaller positive constant $c_4$ such that
\begin{equation} \label{eq:loglog}
 H^{n-1}(\{u=0 \} \cap B_g(x,\rho)) \geq 2^{c_4 \log \tilde N / \log\log \tilde N} F(N) \rho^{n-1}
\end{equation} 
if $N$ is sufficiently large. 
The last observation contradicts to \eqref{eq:contradiction}.

We have proved that $\tilde N$ is bounded from above by some positive constant $N_0$ and we can use \eqref{eq:trivial} with $\rho=r$ to obtain the uniform bound $F(N) \geq \frac{c_5}{N^{n-1}_0}$.
 
\end{proof}
\begin{remark}
Now, we know that $F(N)$ is uniformly bounded from below. Since $ \beta(x,\rho/2)/10 \leq  \tilde N$, the inequality \eqref{eq:loglog} implies the following estimate of the volume of the nodal set:

 $$ \frac{H^{n-1}(\{u=0 \} \cap B_g(x,\rho))}{\rho^{n-1}} \geq  2^{c_6 \log \beta(x,\rho/2)/ \log\log \beta(x,\rho/2)} $$
for $\beta(x,\rho/2) > \beta_0$.
\end{remark}

\section{The lower bound in Yau's conjecture} \label{sec:Yau}
In this section we prove Theorem \ref{Yau}.

Let $B$ be a geodesic ball of fixed radius on a Riemannian manifold $M$.
Consider a function $u$ on $B$ that satisfies $\Delta u +\lambda u = 0$ in $B$ and
  the harmonic extension of $u$
$$h(x,t)= u(x)\exp(\sqrt \lambda t).$$

   The following lemma is well-known, but for the convenience of the reader we give the proof below.
\begin{lemma} \label{le:dense}
 There exists $C_1>0$ and $\lambda_0>0$, depending on $M$ and $B$ only such that
 if $\lambda> \lambda_0$ then $Z_u$ is $\frac{C_1}{\sqrt \lambda}$ dense in $B$.
\end{lemma}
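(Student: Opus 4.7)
The argument I have in mind is the classical one by contradiction using the first Dirichlet eigenvalue of a small geodesic ball, and is essentially independent of the main body of the paper. Suppose, for contradiction, that some geodesic ball $B_g(x,r)\subset B$ of radius $r=C_1/\sqrt{\lambda}$ avoids $Z_u$. After replacing $u$ by $-u$ if necessary, we may assume $u>0$ on $B_g(x,r)$. Let $\phi$ be the first Dirichlet eigenfunction of $-\Delta_g$ on $B_g(x,r)$, with eigenvalue $\mu_1:=\mu_1(B_g(x,r))$; so $\phi>0$ in the interior, $\phi=0$ on the boundary, and the outward normal derivative $\partial_\nu\phi\leq 0$ on $\partial B_g(x,r)$.

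The plan is then to test $u$ against $\phi$ via Green's identity. Using $\Delta u=-\lambda u$ and $\Delta\phi=-\mu_1\phi$ and the fact that $\phi$ vanishes on the boundary, one finds
$$(\mu_1-\lambda)\int_{B_g(x,r)} u\phi\, dV_g = -\int_{\partial B_g(x,r)} u\, \partial_\nu\phi\, dS_g \geq 0.$$
Since $u\phi>0$ in the interior, the integral on the left is strictly positive, forcing $\mu_1\geq\lambda$.

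The complementary ingredient is the upper bound $\mu_1(B_g(x,r))\leq C(M)/r^2$ for $r$ below a fixed scale depending on $M$: in normal coordinates around $x$ the Euclidean and Riemannian metrics are comparable by \eqref{eq:epsilon}, so pulling back and rescaling any fixed smooth compactly supported bump function on the unit Euclidean ball to $B_g(x,r)$ produces a Dirichlet competitor whose Rayleigh quotient is $O(1/r^2)$ with an absolute constant depending only on $M$ and $B$. Substituting $r=C_1/\sqrt{\lambda}$ we obtain $\lambda\leq C(M)\lambda/C_1^2$, which contradicts the previous paragraph as soon as $C_1>\sqrt{C(M)}$; the threshold $\lambda_0$ is chosen so that $r=C_1/\sqrt{\lambda}<R_0$ remains within the regime where normal coordinates and the Rayleigh-quotient computation apply. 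There is no serious obstacle: the proof amounts to unwinding the definitions and invoking that the first Dirichlet eigenvalue of a small geodesic ball on $M$ scales like $1/r^2$.
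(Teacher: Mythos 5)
Your proof is correct, and it takes a genuinely different route from the paper. The paper lifts $u$ to the harmonic function $h(x,t)=u(x)\exp(\sqrt{\lambda}t)$ on $M\times\mathbb{R}$ and applies the Harnack inequality: comparing $h$ at $(x,0)$ and at $(x,\log C_1/\sqrt{\lambda})$ shows the ratio already saturates the Harnack bound, hence $h$ (and therefore $u$) must vanish within a $const/\sqrt{\lambda}$ neighborhood. You instead argue directly on $u$ with the classical domain-monotonicity idea: if $u$ has no zero on $B_g(x,r)$, then pairing $u$ with the first Dirichlet eigenfunction $\phi$ of $-\Delta_g$ on that ball via Green's identity, using $\phi>0$ inside, $\phi=0$ and $\partial_\nu\phi\le 0$ on the boundary, forces $\mu_1(B_g(x,r))\ge\lambda$, which contradicts the scale-invariant Rayleigh-quotient upper bound $\mu_1\le C(M)/r^2$ once $r=C_1/\sqrt{\lambda}$ with $C_1$ large. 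Both proofs are elementary and standard. The paper's Harnack argument is stylistically aligned with the rest of the paper, which reduces eigenfunction problems to harmonic-function problems (the harmonic extension is used anyway in the proof of Theorem~\ref{Yau}), and it requires only pointwise comparison rather than an integration-by-parts identity; your eigenvalue-comparison argument avoids the lift altogether and is arguably more self-contained, though it needs regularity of $\phi$ up to the boundary for Green's identity, which is unproblematic for smooth metrics and geodesic balls. One small but correct point you implicitly use: since $u$ has no zero on the ball, continuity gives a fixed sign, so the normalization $u>0$ is legitimate.
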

\begin{proof}
 Let $y$ be  a point in $ B \times [-1,1]$. Denote the geodesic ball with center at $y$ and radius $r$ on $M\times \mathbb{R}$ by $B_{M\times \mathbb{R}}(y,r)$. 
 The Harnack inequality for harmonic functions says that there exist $C_1(M,B) > 1$ and $r_0(M,B)>0$ such that 
 if  $0<r<r_0$ and $h$ is positive on $B_{M\times \mathbb{R}}(y,r)$, then for any $\tilde y \in B_{M\times \mathbb{R}}(y,r/2) $   the following inequality holds:
 $$   h(\tilde y) < C_1 h(y).$$
 Let us formulate the Harnack inequality in the following form:
 if $ |h(\tilde y)| \geq C_1 |h(y)|$, then $h$ changes sign in $B_{M\times \mathbb{R}}(y,r)$.

 Let $C_2 = \log C_1 $. Consider a point $y=(x,0),$ $x\in B$ and the point  $\tilde y=(x, C_2 / \sqrt \lambda)$.
 Since $h(\tilde y)=C_1 h( y)$, by the Harnack inequality we know that 
 if $\lambda$ is sufficiently big and $B_{M\times \mathbb{R}}(y, 3C_2/\sqrt \lambda) \subset B\times [-1,1]$, then there is a point $\tilde{\tilde y} \in B\times [-1,1]$ such that $h(\tilde {\tilde y})=0$ and $d_g(y,\tilde{\tilde y}) \leq 3C_2/\sqrt \lambda$. In other words, $Z_h$ is $\frac{const}{\sqrt \lambda }$ dense in $B\times [-1,1]$.
Since $Z_h= Z_u \times \mathbb{R}$,  the zero set $Z_u$ is also $\frac{C}{\sqrt \lambda}$ dense in $B$.
 
\end{proof}
 Now, it is a straightforward matter to prove Theorem \ref{Yau}.
\begin{proof}
  By Lemma \ref{le:dense} if $\lambda> \lambda_0$, then $Z_u$ is $\frac{C}{\sqrt \lambda}$ dense in $B$ and we can find $c (\sqrt \lambda)^{n}$ disjoint balls $B_M(x_i, C/\sqrt \lambda)$ such that $u(x_i)=0$. It is sufficient to show that 
\begin{equation}\label{eq:n-1}
H^{n-1}(Z_u \cap B_M(x_i, C/\sqrt \lambda)) \geq c_1 \lambda^{-\frac{n-1}{2}}
\end{equation}
for some $c_1=c_1(M,B)>0$. Indeed, since the balls are disjoint, it would immediately give $H^{n-1}(Z_u\cap B) \geq c_2 \sqrt \lambda$. 

We can apply Theorem \ref{Nad1} for the function $h$ to see that $$H^n(Z_h \cap B_{M\times \mathbb{R}}((x_i,0), \frac{C}{\sqrt \lambda})) \geq c_3 \lambda^{-n/2}.$$
In view of $Z_h=Z_u \times \mathbb{R}$ that gives \eqref{eq:n-1}.
 
\end{proof}

\end{document}